\title[Calder\'on-Zygmund for nonlocal systems]{Calder\'on-Zygmund theory for strongly coupled linear system of nonlocal equations with H\"older-regular coefficient}
\author[T. Mengesha]{Tadele Mengesha}
\address[Tadele Mengesha]{Department of Mathematics, The University of Tennessee, Knoxville, 204 Ayres Hall, 1403 Circle Drive
Knoxville, TN, 37996.
}
\email{mengesha@utk.edu}
\author[A. Schikorra]{Armin Schikorra}
\address[Armin Schikorra]{Department of Mathematics,
University of Pittsburgh,
301 Thackeray Hall,
Pittsburgh, PA 15260, USA}
\email{armin@pitt.edu}
\author[A. Seesanea]{Adisak Seesanea}
\address[Adisak Seesanea]{Sirindhorn International Institute of Technology, Thammasat University, Pathum Thani 12120, Thailand}
\email{adisak.see@siit.tu.ac.th}
\author[S. Yeepo]{Sasikarn Yeepo}
\address[Sasikarn Yeepo]{Sirindhorn International Institute of Technology, Thammasat University, Pathum Thani 12120, Thailand}
\email{sasikarn.y@g.siit.tu.ac.th}
\def\eps{\varepsilon}
\newcommand{\subsubset}{\subset\subset}
\newtheorem{theorem}{Theorem}
\newtheorem{lemma}[theorem]{Lemma}
\newtheorem{proposition}[theorem]{Proposition}
\theoremstyle{definition}
\def\lip{{\rm Lip\,}}
\def\supp{{\rm supp\,}}
\newcommand{\R}{\mathbb{R}}
\newcommand{\brac}[1]{\left (#1 \right )}
\newcommand{\barint}{
\rule[.036in]{.12in}{.009in}\kern-.16in \displaystyle\int }
\newcommand{\barcal}{\mbox{$ \rule[.036in]{.11in}{.007in}\kern-.128in\int $}}
\def\mvint_#1{\mathchoice
          {\mathop{\vrule width 6pt height 3 pt depth -2.5pt
                  \kern -8pt \intop}\nolimits_{\kern -3pt #1}}%
          {\mathop{\vrule width 5pt height 3 pt depth -2.6pt
                  \kern -6pt \intop}\nolimits_{#1}}%
          {\mathop{\vrule width 5pt height 3 pt depth -2.6pt
                  \kern -6pt \intop}\nolimits_{#1}}%
          {\mathop{\vrule width 5pt height 3 pt depth -2.6pt
                  \kern -6pt \intop}\nolimits_{#1}}}
\numberwithin{theorem}{section} \numberwithin{equation}{section}
\newcommand{\lap}{\Delta }
\newcommand{\aleq}{\lesssim}
\newcommand{\ageq}{\gtrsim}
\newcommand{\aeq}{\asymp}
\newcommand{\Rz}{\mathcal{R}}
\newcommand{\laps}[1]{(-\lap) ^{\frac{#1}{2}}}
\newcommand{\lapms}[1]{I^{#1}}
\begin{document}

\begin{abstract}

We extend the Calder\'on-Zygmund theory for nonlocal equations to 
strongly coupled system of linear nonlocal equations $\mathcal{L}^{s}_{A} u  = f$, where the operator $\mathcal{L}^{s}_{A}$ is formally given by 
\[
\mathcal{L}^s_{A}u = \int_{\mathbb{R}^n}\frac{A(x, y)}{\vert x-y\vert ^{n+2s}} \frac{(x-y)\otimes (x-y)}{\vert x-y\vert ^2}(u(x)-u(y))dy.
\]
For $0 < s < 1$ and $A:\mathbb{R}^{n} \times \mathbb{R}^{n} \to \mathbb{R}$ taken to be symmetric and serving as
a variable coefficient for the operator, the system under consideration is the fractional version of the classical Navier-Lam\'e linearized elasticity system.  The study of the coupled system of nonlocal equations is motivated by its appearance in nonlocal mechanics, primarily  in peridynamics.  Our regularity result states that if $A(\cdot, y)$ is uniformly Holder continuous and $\inf_{x\in \mathbb{R}^n}A(x, x) > 0$, then for $f\in L^{p}_{loc},$ for $p\geq 2$, the solution vector $u\in H^{2s-\delta,p}_{loc}$ for some $\delta\in (0, s)$.

\end{abstract}
\everymath{\displaystyle}

\setstretch{1.2}
\maketitle

\section{Introduction}
\subsection{Motivation}
The goal of this work is to obtain Sobolev regularity estimates for solutions of the strongly coupled system of linear nonlocal equations $\mathcal{L}^{s}_{A} u  = f$, where the operator $\mathcal{L}^{s}_{A}$ is formally given by 
\[
\mathcal{L}^s_{A}u = \int_{\mathbb{R}^n}\frac{A(x, y)}{|x-y|^{n+2s}} \frac{(x-y)\otimes (x-y)}{|x-y|^2}(u(x)-u(y))dy.
\]
Here we take $n\geq 1$, $0<s<1$, and $A:\mathbb{R}^{n}\times \mathbb{R}^n \to \mathbb{R}$ is taken to be symmetric and serves as a variable coefficient for the operator $\mathcal{L}^s_{A}$.  For vectors $a=(a_1, \cdots, a_n)$ and $b=(b_1, \cdots, b_n)$ in $\mathbb{R}^n$, the tensor product $a\otimes b$ is the rank one matrix with its $(ij)^{th}$ entry being $a_ib_j$.  

Coupled systems of linear nonlocal equations of the above type appear in applications. In fact, the operator $\mathcal{L}^s_{A}$ is related to the the bond-based linearized peridynamic equation \cite{Silling2001, silling2010linearized}. To briefly describe where the operator comes from, consider a  heterogeneous elastic solid occupying the domain $\Omega$ in $\mathbb{R}^n$, $n=1, 2,$ or $3$, that is linearly deforming when subjected to an external force field $f$. 
In the framework of the peridynamic model, a bounded domain hosting an elastic material is conceptualized as a sophisticated mass-spring system. 
Here, any pair of points $x$ and $y$ within the material is considered to interact through the bond vector $x-y$. 
When external load $f$ is applied, the material undergoes a deformation, mapping a point $x$ in the domain to the point $x + u(x)\in \mathbb{R}^{n}$, where the vector field $u$ represents the
displacement field. Adhering to the principles of uniform small strain theory \cite{silling2010linearized}, the strain of the bond $x-y$ is given by the
nonlocal linearized strain \[
s[u](x, y)= \frac{u(x)-u(y)}{|x-y|}\cdot \frac{x-y}{|x-y|}.
\]
 The linearized bond-based peridynamic static model relates the displacement field $u$ and the external load $f$ by the equation \cite{Du-Gun-Leh-Zhou, Mengesha-Du-2014}
\[
\int_{\mathbb{R}^n} \mathcal{C}(s[u](x, y), x, y) dy = f(x),\quad x\in \Omega.
\] 
where the vector-valued pairwise force density function $\mathcal{C}$ is given by  
\[\mathcal{C}(s[u](x, y), x, y) = A(x, y)\, \rho(x-y)\, s[u](x, y)\, \frac{x-y}{|x-y|}. \]
In the above $A(x,y)$ serves as a 'spring constant' for the bond joining $x$ and $y$ and the function $\rho$ is the interaction kernel that is radial and describes the force strength
between material points. After noting that 
\[
s[u](x, y) \frac{x-y}{|x-y|} = \frac{(x-y)\otimes (x-y)}{|x-y|^2}\frac{u(x)-u(y)}{|x-y|}
\]
then $\mathcal{L}^{s}_{A}$ is precisely the linearized bond-based peridynamic operator corresponding to the kernel of interaction $\rho(x-y) = \frac{1}{|x-y|^{n+2(s-1)}}$. 
\subsection{Statement of the main result}
Our interest is to  address the question of regularity of  solutions $u$ to $\mathcal{L}^s_A u = f$ relative to the data $f$. To that end, we require the coefficient $A$ to satisfy some continuity and boundedness assumptions. First, we say $A$ satisfies a uniform H\"older continuity assumption if for some $\alpha \in (0, 1)$ and $\Lambda>0$, 
\begin{equation}\label{alpha-holder}
    \sup_{z\in \mathbb{R}^n} |A(z,x) - A(z, y)| \leq \Lambda |x-y|^\alpha.
\end{equation}
Given $\lambda, \Lambda >0$ and $\alpha \in (0, 1)$, we define the coefficient class
\[
\mathcal{A}(\alpha, \lambda, \Lambda) = \left \{A: A(x, y) = A(y,x), \,\inf_{x\mathbb{R}^n} K(x, x) > \lambda,\quad \|A\|_{L^{\infty}} \leq \frac{1}{\lambda},\, \text{and satisfies \eqref{alpha-holder}} \right \}.
\]
Given $A\in L^{\infty}(\mathbb{R}^{n}\times \mathbb{R}^{n})$ and $u\in L^{1}_{loc}(\mathbb{R}^{n}, \mathbb{R}^{n})$
we understand $\mathcal{L}^s_{A}u$ as a distribution defined as 
\[
\langle\mathcal{L}^s_{A}u,\varphi\rangle := \int_{\mathbb{R}^n}\int_{\mathbb{R}^n}\frac{A(x, y)}{|x-y|^{n+2s}}(u(y)-u(x))\cdot\frac{(y-x)}{|y-x|}  (\varphi(y)-\varphi(x))\cdot\frac{(y-x)}{|y-x|}dydx
\]
for all $\varphi \in C_c^\infty(\R^n,\R^n).$ 
Moreover, if  $u\in H^{s}(\mathbb{R}^{n}, \mathbb{R}^{n})$, then from the above definition, $\mathcal{L}^{s}_{A}u \in H^{-s}(\mathbb{R}^{n}, \mathbb{R}^{n})$ with the estimate that 
\[
\|\mathcal{L}^{s}_{A}u\|_{H^{-s}} \leq {1\over \lambda}\|u\|_{H^{s}}.
\]
Now given an open set $\Omega\subset \mathbb{R}^{n}$ and $f\in H^{-s}(\mathbb{R}^n, \mathbb{R}^{n}), $ a vector field $u\in H^{s}(\mathbb{R}^{n}, \mathbb{R}^{n})$ is a solution to $\mathcal{L}^{s}_{A}u = f$ in $\Omega$ if 
\begin{equation}\label{intro:equation}
\langle\mathcal{L}^{s}_{A} u, \varphi \rangle = \langle f, \varphi\rangle, 
\,\,\text{for all $\varphi\in C_c^\infty(\Omega,\R^n)$}.
\end{equation}
In the event, $A=1$, then operator agrees with the integral operator defined as  
\[
    (-\mathring{\Delta})^{s}u (x) := {\emph p.v.} \int_{\mathbb{R}^{n}}{1\over |z|^{n+2s}} \left({z\otimes z\over |z|^2}\right)(u(x) - u(x+z)) dz
\]
where the integral converges in the sense of principal value for smooth vector fields.  Notice that if $\mathcal{F}$ is the Fourier transform, then for vector fields $u$ in the Schwarz space $\mathcal{S}(\mathbb{R}^n, \mathbb{R}^n)$, we have 
\begin{equation}\label{Fourier-vector-Lap}
\mathcal{F}((-\mathring{\Delta})^{s}u) = (2\pi |\xi|)^{2s}(\ell_1\mathbb{I} + \ell_{2} {\xi\otimes\xi\over |\xi|^2}) \mathcal{F}(u)
\end{equation}
for some positive constants $\ell_1$ and $\ell_2$ depending only on $n$ and $s.$ 
As a consequence, as shown in \cite{Mengesh-Scott2019} for any $\tau>0$ and $f\in L^{p}(\mathbb{R}^{n}, \mathbb{R}^{n})$ with  $1<p<\infty,$ then the solution $u$ to 
\[
(-\mathring{\Delta})^{s}u + \tau u = f
\]
lives in $H^{2s, p}(\mathbb{R}^n, \mathbb{R}^n)$. For the nonlocal equation of variable coefficient \eqref{intro:equation},  we would like to obtain a Sobolev regularity of the above type for solutions  in the event that the right hand side $f$ has additional regularity.
We begin by noting that for some $\lambda$ and $\Lambda$, $A\in \mathcal{A}(\alpha, \lambda, \Lambda)$, and $f\in H^{-s}(\mathbb{R}^n, \mathbb{R}^{n})$, a solution to \eqref{intro:equation} exists under some volumetric condition on $u$. Indeed, a minimizer of the energy 
\[
E(u) = {1\over 2} \langle\mathcal{L}^{s}_{A} u, u \rangle - \langle f, u\rangle
\]
over the space $V=\{u\in H^{s}(\mathbb{R}^n,\mathbb{R}^{n}): u=0 \quad \text{on $\mathbb{R}^{n}\setminus \Omega$}  \}$ will satisfy the equation \eqref{intro:equation}. The existence of a minimizer for the quadratic functional $E$ over $V$, with a possible sign changing $A\in \mathcal{A}(\alpha, \lambda, \Lambda)$ will be shown later.
As has been demonstrated in \cite{Mengesha-Du-2014}, with a proper multiplicative constant $c(s,n)$, in terms of the nonlocality parameter $s$, the operator $c(s,n)\mathcal{L}^{s}_{A}u$ that corresponds to $A(x,y) = {1/2}(a(x) + a(y))$ will converge in an appropriate sense to the Lam\'e differential operator 
\[
L_0 u (x) = \text{div}(a(x)\nabla u) +2 \nabla\left( a(x) \text{div} \,u(x)\right).
\]
This operator is strongly elliptic in the sense of Legendre-Hadamard but not uniformly elliptic.  
One can then view \eqref{intro:equation} as a fractional analogue of the classical Navier-Lam\'e system of linearized elasticity equation. 

The main result of the paper is the following interior regularity estimate which is  the version of the regularity result proved in \cite{MSY21} for the coupled system of nonlocal equations under discussion.

\begin{theorem}\label{th:main}
Let  $s\in(0, 1)$ and $s \leq t<\min\{ 2s, 1\}$.  Let $\Omega\subset \mathbb{R}^{n}$ be an open bounded set. 
If for $2\leq q<\infty$, $f_1,f_2 \in L^q(\Omega,\mathbb{R}^n)\cap L^2(\R^n,\mathbb{R}^n)$, and  $u\in  H^{s}(\mathbb{R}^n,\mathbb{R}^n)$  is  a distributional solution of $\mathcal{L}^{s}_{A}u = (-\Delta)^{2s-t\over 2} f_1 + f_2$ in $\Omega$, in the sense that, 
\[
\langle\mathcal{L}^{s}_{A}u, \varphi\rangle=\int_{\R^{n}}  \langle f_1, \laps{2s-t} \varphi\, \rangle dx  + \int_{\R^{n}} \langle f_2, \varphi\,\rangle dx \,\quad \forall \varphi \in C_c^\infty(\Omega;\R^n), 
\]
with $\mathcal{L}^{s}_{A}$ corresponding to $A\in \mathcal{A}(\alpha, \lambda, \Lambda)$ for some given $\alpha\in (0, 1)$ and $\lambda, \Lambda>0$, then we have $\laps{t} u \in L^q_{loc}(\Omega,\mathbb{R}^n)$ and for any $\Omega' \subsubset \Omega$  we have 
\[
 \|\laps{t} u\|_{L^q(\Omega')} \leq C \brac{\|u\|_{W^{s,2}(\mathbb{R}^{n})} + \sum_{i=1}^2 \|f_i\|_{L^q(\Omega)}+\|f_i\|_{L^2(\R^n)} }.
\]
The constant $C$ depends only $s$,$t$,$q$,$\alpha$,$\lambda$,$\Lambda$,$\Omega$, and $\Omega'$.  
\end{theorem}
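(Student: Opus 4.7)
The plan is to adapt the coefficient-freezing and commutator scheme of \cite{MSY21} to the vector-valued operator $\mathcal{L}^s_A$. I work locally: fix $x_0 \in \Omega'$ and a radius $r>0$ with $B_{2r}(x_0)\subsubset \Omega$, and pick a cutoff $\eta \in C_c^\infty(B_{2r}(x_0))$ with $\eta \equiv 1$ on $B_r(x_0)$. Set $A_0 := A(x_0, x_0)\geq \lambda$. A bound on $\|\laps{t}(\eta u)\|_{L^q}$ in terms of the right-hand side plus lower-order norms of $u$, combined with a finite covering of $\Omega'$, will give the theorem. The key decomposition is
\[
\mathcal{L}^s_{A_0}(\eta u) = \mathcal{L}^s_A(\eta u) + \bigl(\mathcal{L}^s_{A_0}-\mathcal{L}^s_A\bigr)(\eta u),
\]
together with $\mathcal{L}^s_A(\eta u) = \mathcal{L}^s_A u - \mathcal{L}^s_A((1-\eta)u)$. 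Tested against $\varphi \in C_c^\infty(B_r)$ and using \eqref{intro:equation}, the right-hand side is (i) $\laps{2s-t}f_1 + f_2$; (ii) the nonlocal tail $\mathcal{L}^s_A((1-\eta)u)$, which is smoothing on $B_r$ because of the separated supports and hence controllable by $\|u\|_{H^s}$; and (iii) the commutator term that will need the Hölder hypothesis.

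By \eqref{Fourier-vector-Lap}, the frozen operator $\mathcal{L}^s_{A_0}$ is a matrix-valued Fourier multiplier of order $2s$, invertible up to a harmless lower-order correction via the $\tau$-regularization already recalled from \cite{Mengesh-Scott2019}. This constant-coefficient Calder\'on-Zygmund theory yields
\[
\|\laps{t}(\eta u)\|_{L^q(\mathbb{R}^n)} \lesssim \|\laps{t-2s}\mathcal{L}^s_{A_0}(\eta u)\|_{L^q(\mathbb{R}^n)} + \|u\|_{L^2(\mathbb{R}^n)}.
\]
Inserting the decomposition, the contributions of $f_1$ and $f_2$ produce the desired terms directly ($\laps{t-2s}$ cancels against $\laps{2s-t}$ on $f_1$, and acts as an $L^q\to L^q$ Riesz potential of nonpositive order on the compactly supported $f_2$), while the tail from (ii) is absorbed into $\|u\|_{H^s}$. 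Everything reduces to controlling the commutator.

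The crux of the proof is the estimate
\[
\bigl\|\laps{t-2s}(\mathcal{L}^s_{A_0}-\mathcal{L}^s_A)(\eta u)\bigr\|_{L^q(\mathbb{R}^n)} \leq C r^\alpha \|\laps{t}(\eta u)\|_{L^q(\mathbb{R}^n)} + C\bigl(\|u\|_{H^s}+\|u\|_{L^2}\bigr),
\]
valid provided $\delta := 2s - t$ is chosen small relative to $\alpha$. The bilinear kernel of $\mathcal{L}^s_{A_0}-\mathcal{L}^s_A$ carries the factor $A_0-A(x,y)$, bounded by $\Lambda(|x-x_0|^\alpha+|y-x_0|^\alpha)$ on $B_{2r}\times B_{2r}$ by \eqref{alpha-holder} and the symmetry of $A$, which is what yields the $r^\alpha$ prefactor. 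I would split the kernel into a near-diagonal piece---handled by a vector-valued fractional Leibniz / Riesz potential argument exploiting the odd tensor structure $(x-y)\otimes(x-y)/|x-y|^2$ together with the Hölder weight---and a far-diagonal piece that is smoothing. Once this commutator bound is established, choosing $r$ so that $Cr^\alpha\leq \tfrac{1}{2}$ absorbs the critical term into the left, and a covering of $\Omega'$ finishes the proof.

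The main obstacle is precisely the commutator estimate: one must extract the $r^\alpha$ gain in the correct negative Sobolev norm while handling a matrix-valued singular kernel whose symbol is only Legendre-Hadamard elliptic (not uniformly elliptic), so that invertibility of $\mathcal{L}^s_{A_0}$ requires the full tensorial Fourier analysis of \eqref{Fourier-vector-Lap} rather than a scalar argument. This structural degeneracy is also what forces the strict loss $\delta>0$ in the theorem and prevents one from reaching the endpoint $t=2s$.
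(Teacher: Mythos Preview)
Your freezing-and-absorption scheme is the natural Campanato approach for local second-order systems, but for the nonlocal operator $\mathcal{L}^s_A$ the central step fails as written. The bound $|A_0-A(x,y)|\le C r^\alpha$ on $B_{2r}\times B_{2r}$ tells you only that the \emph{kernel} of $\mathcal{L}^s_{A_0}-\mathcal{L}^s_A$ is dominated by $Cr^\alpha |x-y|^{-n-2s}$; it does \emph{not} imply that $I^{2s-t}\circ(\mathcal{L}^s_{A_0}-\mathcal{L}^s_A)$ is $r^\alpha$ times an operator bounded from $\dot H^{t,q}$ to $L^q$. For a general bounded two-variable coefficient $B(x,y)$ the operator $\mathcal{L}^s_B$ is only known to map $H^{s,2}\to H^{-s,2}$; the $H^{t,q}\to H^{t-2s,q}$ mapping you need to close the absorption is exactly the Calder\'on--Zygmund statement you are trying to prove, so the argument is circular. (There is also a smaller slip: $\laps{t-2s}=I^{2s-t}$ is a Riesz potential of \emph{positive} order and is not $L^q\to L^q$ bounded globally, so the $f_2$ contribution needs more care than stated.)

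This is precisely why the paper does \emph{not} freeze to a constant. Instead it compares $\mathcal{L}^s_A$ to the weighted multiplier operator
\[
\langle \bar{\mathfrak{L}}_{A_D}^{t,2s-t}u,\varphi\rangle=\int_{\R^n} A_D(z)\,\langle(\mathbb{I}+c\,\Rz\otimes\Rz)\laps{t}u(z),\,\laps{2s-t}\varphi(z)\rangle\,dz,
\]
with variable diagonal coefficient $A_D(z)=A(z,z)$, for which an optimal $L^q$ theory is established directly by duality and iteration (\Cref{intermediate-regular-Klaps}, \Cref{pr:ourtheorem1.7}); the key vector-valued ingredient is \Cref{U=lameU}, which inverts $\mathbb{I}+c\,\Rz\otimes\Rz$ in $L^p$ despite mere Legendre--Hadamard ellipticity. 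The difference $\mathcal{L}^s_A-\bar{\mathfrak{L}}_{A_D}^{t,2s-t}$ is then shown to be \emph{genuinely lower order}: the H\"older hypothesis is used to gain a factor $|x-y|^\alpha$ in the kernel (not a smallness factor $r^\alpha$), and this is converted, via the Riesz-potential representation \eqref{Def-L(s,A)-interms-Rp} and the commutator bounds of \Cref{pr:prop11} and \Cref{pr:prop12}, into an estimate of the form $\int \lapms{\sigma-\eps}|\laps{t}u|\,|\laps{2s-t-\eps}\varphi|$. This lower-order gain is what drives the iteration in \Cref{th:slighincrease}; no absorption of a same-order term is ever attempted. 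Your final remark also misidentifies the source of the loss $2s-t>0$: it comes from this iterative commutator mechanism (already present in the scalar case of \cite{MSY21}), not from the tensorial degeneracy of the symbol.
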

The proof of the theorem parallels the approach used in \cite{MSY21}. Namely, we compare the operator $\mathcal{L}_{A}^{s}$ with the simpler operator $\bar{\mathfrak{L}}_{A_{D}}^{s_1,s_2}$, where $s_1 + s_2 = 2s$, and is defined as, for $u\in H^{s}(\mathbb{R}^n ,\mathbb{R}^n)$ and $\varphi\in C_c^{\infty}(\mathbb{R}^n, \mathbb{R}^n)$, 
\begin{equation}\label{fractional-Lame}
\langle \bar{\mathfrak{L}}_{A_D}^{s_1,s_2} u, \varphi\rangle = \int_{\mathbb{R}^{n}} A_{D}(z)\left\langle (c_{1}\mathbb{I} + c_{2} \mathcal{R}\otimes \mathcal{R})(-\Delta)^{s_1\over 2}u(z), (-\Delta)^{\frac{s_2}{2}}\varphi (z) \right\rangle dz
\end{equation}
for constants $c_1$ and $c_2$ that will be determined as a function $s$ and $n$. In the above definition, the operator $\mathcal{R} = (\mathcal{R}_1, \mathcal{R}_2, \cdots, \mathcal{R}_n)$ is the vector of Riesz transforms,  
and $A_{D}(z) = A(z, z),$ the restriction of the coefficient $A$ on the diagonal. Notice that for constant coefficients the two operators $\mathcal{L}_{A}^{s}$ and $\bar{\mathfrak{L}}_{A_D}^{s,s}$ coincide. Indeed, if $A(x, y) = A,$ constant, then by using \eqref{Fourier-vector-Lap}, for vector fields in the Schwarz space 
\[
\mathcal{L}_{A}^{s}u = A  (-\mathring{\Delta})^{s}u = A(\ell_1(-\Delta)^{s\over2} u + \ell_2 (\mathbf{R}\otimes \mathbf{R})(-\Delta)^{s\over 2}u) = \bar{\mathfrak{L}}_{A}^{s,s} u
\]
with $c_1=\ell_1$ and $c_2 = \ell_2$.  We will prove an optimal regularity result for solutions of the strongly coupled equation 
\begin{equation}\label{Base-system}
\langle \bar{\mathfrak{L}}_{A_D}^{s_1,s_2} u, \varphi\rangle = \langle g, \varphi\rangle,\quad \forall \varphi\in C_c^{\infty}(\mathbb{R}^n, \mathbb{R}^n)
\end{equation}
and use those solutions as approximations of the solution to the original system of equations. The mechanism we accomplish this is via perturbation argument where we show that  the difference operator 
\[
\mathcal{D}_{s,t} u = \mathcal{L}_{A}^{s}u - \bar{\mathfrak{L}}_{A_D}^{s,t} u
\]
can be understood as a lower order term in the event that $A$ is H\"older continuous. 

While our work studies solutions to strongly coupled linear nonlocal pdes, there has been a number of results in the literature that studied the regularity of solutions to scalar nonlocal pdes. To name a few, optimal local regularity results are obtained in \cite{BiccariWarmaZuazua+2017+387+409} for weak solutions to the Dirichlet problem associated with the fractional Laplacian. Similar results are obtained for the fractional heat equation in \cite{10.1007/978-3-030-00874-1_2,GRUBB20182634}. Almost optimal regularity results are obtained in \cite{Matteo2017} for weak solutions to nonlocal equations with  H\"older regular coefficients. Optimal Sobolev regularity are proved in \cite{DONG20121166} for strong solutions to nonlocal equations with translation invariant coefficients ($A(x, y) = A(x-y)$). For equations with less regular coefficients, higher integrability and higher differentiability results are obtained in \cite{Nowak2023,NOWAK2020111730} for nonlocal equations with variable coefficients that have small mean oscillations. See also \cite{Fall1,Fall2} for related results. For elliptic, measurable, and bounded coefficients, solutions to nonlocal pdes are proved in \cite{KMS15} to have a self-improvement property where  higher integrability and higher differentiability are obtained without any smoothness assumption on the coefficients, see also \cite{S16}. Similar results are also verified in \cite{Scott-Mengesha-2022,byun2023regularity} for solutions to nonlocal double phase problems. For a concise description of the results of  the above mentioned manuscripts, we refer to \cite{MSY21}.

The paper is organized as follows. In the next section we estimate $\langle\mathcal{D}_{s,t} u, \varphi\rangle$ in terms of the Riesz potential $I^{s} = (-\Delta)^{-\frac{s}{2}}$.  In Section \ref{sec-Base-system}, we will develop the optimal regularity result for a solution of equation \eqref{Base-system}. In Section \ref{sec-regularity-proof}, we prove the main result of the paper by using an iterative argument making use of the commutator estimate we prove in Section \ref{s:commies} and the optimal regularity result obtained in Section \ref{sec-Base-system}.
\subsection{Notation and some preliminaries}
We now fix notations and convention we will use throughout the paper. We will also discuss some preliminary results we need in the sequel. 

We begin by noting that domains of integrals are always open sets and we use the symbol $\subsubset$ to say compactly contained, e.g. $\Omega_1 \subsubset \Omega_2$ if $\overline{\Omega_1}$ is compact and $\overline{\Omega_1} \subset \Omega_2$.
 Constants change from line to line, and unless it is important we may not detail their dependence on various parameters. We will make frequent use of $\aleq$, $\ageq$ and $\aeq$, which denotes inequalities with multiplicative constants (depending on non-essential data). For example we say $A \aleq B$ if for some constant $C > 0$ we have $A \leq C B$. We will use the angle bracket $\langle\cdot, \cdot\rangle$ to represent the standard inner product or the duality pairing depending on the context. 

Our arguments below make use of the various definition and properties of fractional Laplacian operators, and accompanying Sobolev spaces, see  \cite{DNPV12,G19}, or monographs \cite{S02} for more on fractional operators. We will make use of 
Sobolev inequalities and various embedding 
that can be found in \cite{RS96}.

To that end, for $s \in (0,2)$ the fractional Laplacian $\laps{s}$ is,  defined via the Fourier transform,
\[
\laps{s}u = \mathcal{F}^{-1}(2\pi |\xi|^{s} \hat{u} )
\]
where the Fourier transform is defined as $\mathcal{F}(u) (\xi) = \hat{u}(\xi) = \int_{\mathbb{R}^{n} } e^{-2\pi\imath x\cdot \xi} u(x) dx$. It also has a useful integral representation and for any vector field $u$ in the Schwartz class
\[
 \laps{s} u(x) = c_{s,n} \emph{p.v.}\int_{\R^n} \frac{u(x)-u(y)}{|x-y|^{n+s}}\, dy, 
\]
where \emph{p.v} stands for the  \emph{principal value}, whose mentioning we will suppress. The inverse operator of the fractional Laplacian is the Riesz potential whose integral 
representation is 
\[
 (-\lap)^{-\frac{s}{2}} v(x) \equiv \lapms{s} v(x) = c\int_{\R^n} \frac{v(y)}{|x-y|^{n-s}} \, dy
\]
for a vector field $v$ in the Schwartz class. Sobolev inequalities needed for this paper are proved in \cite[Proposition 2.1]{MSY21} (see also \cite{alma996965690102311}) and we summarize them as follows. 

\begin{lemma}\label{S-I}
\begin{enumerate}
\item[(a)] If $sp < n$, then there exists a constant $C=C(s,p,n)>0$ such that 
\begin{equation}\label{eq:sob:glob}
 \|\lapms{s} v\|_{L^{\frac{np}{n-sp}}(\R^n)} \leq C\, \|v\|_{L^p(\R^n)}\quad \text{for any $v \in L^p(\R^n,\mathbb{R}^n)$}.
\end{equation}
\end{enumerate}
In addition, if $\Omega\subset\R^{n}$ is bounded, then corresponding to any $q \in [1,\frac{np}{n-sp}]$, there is a constant $C=C(s,p,n, \Omega)>0$ such that 
\begin{equation}\label{eq:sob:loc1}
 \|\lapms{s} v\|_{L^q(\Omega)} \leq C\, \|v\|_{L^p(\R^n)} \quad \text{for any $v \in L^p(\R^n,\mathbb{R}^n)$}.
\end{equation}
\begin{enumerate}

\item [(b)]If $sp \geq n$ and $\Omega\subset\R^{n}$ is bounded domain,  then for any $q \in [1,\infty)$, and $r \in [1,\frac{n}{s})$, there exists a constant $C=C(s,p,n, \Omega)>0$ such that for any $v\in L^p(\R^n,\mathbb{R}^n)$
\begin{equation}\label{eq:sob:loc2}
 \|\lapms{s} v\|_{L^q(\Omega)} \leq C\, \brac{\|v\|_{L^p(\R^n)}+\|v\|_{L^r(\R^n)}}.
\end{equation}
\end{enumerate}
\end{lemma}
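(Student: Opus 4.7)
The plan is to reduce both parts of the lemma to the classical Hardy-Littlewood-Sobolev inequality, with a cutoff argument to handle the supercritical case $sp\ge n$.

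For part (a), I would invoke the Hardy-Littlewood-Sobolev mapping $I^{s}\colon L^{p}(\R^{n})\to L^{np/(n-sp)}(\R^{n})$, proved by the standard split of the Riesz kernel into near and far parts controlled respectively by the Hardy-Littlewood maximal function and by H\"older's inequality against $L^{p}$; the latter step requires $(n-s)p'>n$, i.e.\ $sp<n$, which holds here. The local estimate \eqref{eq:sob:loc1} is then immediate from H\"older's inequality on the bounded set $\Omega$ whenever $q\le np/(n-sp)$.

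For part (b), the H\"older tail estimate fails because $(n-s)p'\le n$. I would fix a ball $B_{M}$ centered at the origin with $\Omega\subset B_{M}$, and split
\[
I^{s}v=I^{s}(v\chi_{B_{2M}})+I^{s}(v\chi_{B_{2M}^{c}}).
\]
Since $sp\ge n$ forces $p\ge n/s$, I can choose $\rho\in[1,n/s)$ with $\rho\le p$ and $n\rho/(n-s\rho)\ge q$. H\"older on the bounded set $B_{2M}$ yields $\|v\chi_{B_{2M}}\|_{L^{\rho}(\R^{n})}\le C\|v\|_{L^{p}(\R^{n})}$, so part (a) gives an $L^{n\rho/(n-s\rho)}(\R^{n})$ bound on the interior piece, which embeds into $L^{q}(\Omega)$ by H\"older on $\Omega$. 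For the exterior piece, $|x-y|\ge M$ uniformly for $x\in\Omega$ and $y\in B_{2M}^{c}$, so H\"older with the $L^{r}$ norm produces
\[
|I^{s}(v\chi_{B_{2M}^{c}})(x)|\le \|v\|_{L^{r}(\R^{n})}\Bigl(\int_{|z|\ge M}|z|^{-(n-s)r'}\,dz\Bigr)^{1/r'}\le C\|v\|_{L^{r}(\R^{n})},
\]
where the tail integral converges precisely because $r<n/s$ forces $(n-s)r'>n$. This yields a uniform pointwise bound on $\Omega$, and hence the claimed $L^{q}(\Omega)$ estimate after multiplying by $|\Omega|^{1/q}$.

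The main obstacle lies in the exterior estimate of part (b): the $L^{p}$-H\"older at infinity is unavailable when $sp\ge n$, and the auxiliary hypothesis $v\in L^{r}(\R^{n})$ with $r<n/s$ is exactly what restores integrability of the tail kernel. This is the reason both $\|v\|_{L^{p}}$ and $\|v\|_{L^{r}}$ appear on the right-hand side of \eqref{eq:sob:loc2}.
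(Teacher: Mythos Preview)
Your argument is correct. The paper does not actually prove this lemma; it merely records it as a summary of \cite[Proposition~2.1]{MSY21}, so there is no in-paper proof to compare against. Your approach---Hardy--Littlewood--Sobolev for part~(a), and for part~(b) a near/far split of $v$ relative to a ball containing $\Omega$, handling the near piece via part~(a) with an auxiliary subcritical exponent $\rho<n/s$ and the far piece by the convergent tail integral that the hypothesis $r<n/s$ guarantees---is the standard route and is exactly how such estimates are established in the cited reference.

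One small remark: when you choose $\rho\in(1,n/s)$ with $n\rho/(n-s\rho)\ge q$, you should note that such a $\rho$ exists with $\rho>1$ because $s<n$ (implicit in the Riesz potential being well defined), so that $n/s>1$ and the Sobolev exponent $n\rho/(n-s\rho)$ can be made arbitrarily large by taking $\rho$ close to $n/s$. This is the only point where a reader might pause, and it is easily addressed.
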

The above Sobolev estimates together with the relationship between the fractional Laplacian and the Riesz potentials yield the following result that is also stated and proved in \cite[Proposition 2.4]{MSY21}. We state it here in a slightly different way to suit our setting. 

\begin{lemma}[\cite{MSY21}]\label{Prop2.4MSY}
Suppose that $\eta_1, \eta_2\in C^{\infty}_{c}(\mathbb{R}^{n})$, and $\eta_2 = 1$ in the neighborhood of the support of $\eta_1$. Then for any $\psi \in C_c^{\infty}(\mathbb{R}^{n})$ such that $\text{Supp}(\psi)\subset \{x: \eta_1(x) = 1\},$ and any $q, p\in(1,\infty)$ and $\tau\in (0, 2) $ we have
\begin{equation}\label{prop2.4-parta}
\|(1-\eta_2)\laps{\tau} ((1-\eta_1)\lapms{\tau} \psi)\|_{L^{q}(\mathbb{R}^{n})} \leq C \|\psi\|_{L^{p}(\mathbb{R}^{n})}. 
\end{equation}
Moreover, if $r > {np\over n+\tau p} > 1$ for $\tau <1$, then for any bounded set $\Omega\subset \mathbb{R}^n$, there exists a constant $C(\Omega)$ such that
\begin{equation}\label{prop2.4-partb}
\|\laps{\tau} ((1-\eta_1)\lapms{\tau} \psi)\|_{L^{r'}(\Omega)} \leq C \|\psi\|_{L^{p'}(\mathbb{R}^{n})}. 
\end{equation}
In either case the constant $C$ may also depend on $r$, $q$, $\tau$, $p$, $n$, and on $\eta_1, \eta_2$,  but not on $\psi$. 
\end{lemma}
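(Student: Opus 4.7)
The plan is to exploit the inverse relation $\laps{\tau}\lapms{\tau}\psi=\psi$ together with the compact support of $\eta_1\lapms{\tau}\psi$, which is smooth because $\psi\in C_c^\infty$. First we split $(1-\eta_1)\lapms{\tau}\psi=\lapms{\tau}\psi-\eta_1\lapms{\tau}\psi$ and apply $\laps{\tau}$ to obtain the key identity
\[
\laps{\tau}\bigl((1-\eta_1)\lapms{\tau}\psi\bigr)=\psi-\laps{\tau}(\eta_1\lapms{\tau}\psi).
\]
Both parts of the lemma then reduce to analyzing $\laps{\tau}(\eta_1\lapms{\tau}\psi)$, where the argument $\eta_1\lapms{\tau}\psi$ lies in $C_c^\infty(\supp\eta_1)$.

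For part (a), the hypothesis $\supp\psi\subset\{\eta_1=1\}$ combined with $\eta_2\equiv 1$ on a neighborhood of $\supp\eta_1$ forces $(1-\eta_2)\psi=0$, so it suffices to bound $\|(1-\eta_2)\laps{\tau}(\eta_1\lapms{\tau}\psi)\|_{L^q(\mathbb{R}^n)}$. For every $x\in\supp(1-\eta_2)$ we have $\dist(x,\supp\eta_1)\geq d_0>0$, so $x$ lies outside $\supp(\eta_1\lapms{\tau}\psi)$ and the fractional Laplacian collapses to the absolutely convergent integral
\[
\laps{\tau}(\eta_1\lapms{\tau}\psi)(x)=-c_{\tau,n}\int\frac{(\eta_1\lapms{\tau}\psi)(y)}{|x-y|^{n+\tau}}\,dy.
\]
Since $|x-y|\geq d_0$ on the relevant range of $y$ and $|x-y|\aeq|x|$ for large $|x|$, this yields $|(1-\eta_2)(x)\laps{\tau}(\eta_1\lapms{\tau}\psi)(x)|\aleq(1+|x|)^{-n-\tau}\|\eta_1\lapms{\tau}\psi\|_{L^1}$, which lies in $L^q(\mathbb{R}^n)$ for every $q\in(1,\infty)$. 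H\"older on the compact set $\supp\eta_1$ followed by Lemma \ref{S-I} then bounds $\|\eta_1\lapms{\tau}\psi\|_{L^1}$ by $C\|\psi\|_{L^p}$.

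For part (b) we go one step further: since $(1-\eta_1)\psi=0$, the above identity rewrites as a commutator,
\[
\laps{\tau}\bigl((1-\eta_1)\lapms{\tau}\psi\bigr)=-[\laps{\tau},\eta_1]\lapms{\tau}\psi,\qquad [\laps{\tau},\eta_1]h(x)=-c_{\tau,n}\int\frac{\eta_1(y)-\eta_1(x)}{|x-y|^{n+\tau}}h(y)\,dy.
\]
Lipschitz continuity of $\eta_1$ yields the kernel bound $\min(|x-y|^{1-n-\tau},|x-y|^{-n-\tau})$, i.e.\ a Riesz kernel of order $1-\tau$ at short range plus a rapidly decaying tail; this is locally integrable precisely because $\tau<1$. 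Splitting the kernel accordingly and applying Hardy--Littlewood--Sobolev to the Riesz piece and Young's inequality to the tail, precomposed with the Sobolev mapping $\lapms{\tau}:L^{p'}\to L^{np'/(n-\tau p')}$ from Lemma \ref{S-I}, yields $\|[\laps{\tau},\eta_1]\lapms{\tau}\psi\|_{L^{r'}(\Omega)}\aleq\|\psi\|_{L^{p'}}$ on the bounded set $\Omega$. The main obstacle is the exponent bookkeeping: one must chain the gain of order $\tau$ from $\lapms{\tau}$ with the gain of order $1-\tau$ from the Lipschitz commutator and verify the HLS/Young arithmetic is compatible with the hypothesized range $r>np/(n+\tau p)>1$, which is equivalent to the Sobolev admissibility condition $\tau p'<n$ needed to open up the first step of the chain.
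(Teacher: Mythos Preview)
The paper does not give its own proof of this lemma: it cites \cite[Proposition 2.4]{MSY21} and remarks only that ``the same proof can be repeated'' for the slightly reformulated part (a). So there is nothing in-paper to compare against; the question is whether your argument stands on its own, and it does.

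For part (a) the disjoint-support reduction is exactly the right idea, and the decay bound $(1+|x|)^{-n-\tau}\|\eta_1\lapms{\tau}\psi\|_{L^1}$ followed by Lemma~\ref{S-I} is correct. For part (b) the commutator reformulation and the kernel bound $\min(|x-y|^{1-n-\tau},|x-y|^{-n-\tau})$ are correct under the hypothesis $\tau<1$. One simplification you are overlooking: this kernel already lies in $L^1(\R^n)$ (the short-range piece is integrable because $1-\tau>0$, the tail because $\tau>0$), so Young's inequality alone gives $[\laps{\tau},\eta_1]:L^a\to L^a$ for every $a\in[1,\infty]$; Hardy--Littlewood--Sobolev is not needed. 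The chain then reduces to the single Sobolev step $\lapms{\tau}:L^{p'}\to L^{np'/(n-\tau p')}$ (valid exactly when $\tau p'<n$, i.e.\ when $np/(n+\tau p)>1$), followed by the $L^a\to L^a$ commutator bound, followed by the local embedding $L^{np'/(n-\tau p')}(\Omega)\hookrightarrow L^{r'}(\Omega)$, which holds because $r>np/(n+\tau p)$ is equivalent to $r'<np'/(n-\tau p')$. This dissolves the ``exponent bookkeeping'' you flagged at the end.
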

Notice that because of the strict inclusion of the support of $1-\eta_2$ into the support of $1-\eta_1$, the inequality \eqref{prop2.4-parta} holds for any $p, q\in (1, \infty)$. The way it is written here, the inequality is slightly different from part a) of \cite[Proposition 2.4]{MSY21} but the same proof can be repeated for the proof of \eqref{prop2.4-parta}. 

 We also mention the dual definition of $\laps{s}$ operator. Indeed, for vector fields $u$ and $v$ in the Schwartz class, the $L^2$-inner product of $ \laps{s} u(x)$ and  $v(x)$  can be represented as, for $s \in (0,2)$,
 \begin{equation}\label{eq:fraclap}
 \int_{\R^n} \laps{s} u(x) \cdot \, v(x) dx = \int_{\R^n} \int_{\R^n} \frac{(u(y)-v(x))\cdot(v(y)-v(x))}{|x-y|^{n+s}}\, dx\, dy.
 \end{equation}
 The proof can be found \cite[Proposition 2.36.]{ArminPhD} or \cite{DNPV12}.

 The Riesz transform, $\Rz = (\Rz_1,\ldots,\Rz_n) := \nabla \lapms{1}$, plays a central role in this work. First, $\Rz$ has  the Fourier symbol $c \imath \frac{\xi}{|\xi|}$, and can also be 
 represented as 
 \[
  \Rz f(x) = \int_{\R^n} \frac{\frac{x-y}{|x-y|} }{|x-y|^{n}}\, f(y)\, dy.
 \]
Second, we will use the fact that they are Calder\'on-Zygmund operators and for $1<p<\infty$, there exists a constant $C=C(n,p)>0$ such that 
\[
\|  \Rz f\|_{L^{p}} \leq C\|f\|_{L^{p}}, \quad\text{for all $f\in L^{p}$}. 
\]
Finally, we state and prove existence of a solution to the nonlocal system \eqref{intro:equation}. We note that members of the coefficient class $\mathcal{A}(\alpha, \lambda,\Lambda)$ can be negative off diagonal. Indeed, as indicated in \cite{MSY21}, the coefficient $A(x, y) = {2\lambda + |x|^\alpha + |y|^\alpha\over \lambda + |x|^\alpha + |y|^\alpha} + 10^{6}(\sin x+\sin y)
{|x-y|^\alpha \over 1+|x-y|^\alpha }$ belongs to $\mathcal{A}(\alpha, \lambda,\Lambda)$ and yet can be negative off diagonal. 

\begin{proposition}
   Suppose that $\Omega\subset \mathbb{R}^n$ is an open bounded set with Lipschitz boundary and that $A\in \mathcal{A}(\alpha, \lambda, \Lambda)$ for $\alpha\in(0, 1)$. Then for any, $s\in (0, 1)$, there exists $C>0$ so that if ${\lambda \over \Lambda} > C$ and $f\in H^{-s}(\R^n, \R^n)$, there exists $u\in H^{s}(\R^n, \R^n)$ such that 
   \[
   \begin{cases}
       \mathcal{L}_{A} u &= f\quad \text{in $\Omega$}\\
       u&=0\quad\text{in $\R^n\setminus \Omega$}, 
   \end{cases}
   \]
in the sense that $u=0$ in $\R^n\setminus \Omega$ and  
$
\langle  \mathcal{L}_{A} u, \varphi\rangle = \langle f, \varphi \rangle,
$ for all $\varphi\in C_c^{\infty}(\Omega,\R^n)$. Moreover the solution minimizes the energy 
\[
E_{A}(v) = {1\over 2} \langle\mathcal{L}^{s}_{A} v, v \rangle - \langle f, v\rangle
\]
over the space $V=\{v\in H^{s}(\mathbb{R}^n,\mathbb{R}^{n}): v=0 \ \text{on $\mathbb{R}^{n}\setminus \Omega$}  \}$.
\end{proposition}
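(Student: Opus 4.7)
The plan is to apply the direct method of the calculus of variations on the Hilbert space $V$. The dominant difficulty is establishing coercivity of the quadratic form $u \mapsto \langle \mathcal{L}^s_A u, u\rangle$ on $V$, because $A$ may be sign-changing off the diagonal; once coercivity is secured, existence of a minimizer, strict convexity, weak lower semicontinuity, and the Euler-Lagrange identification follow from standard Hilbert-space arguments.

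For coercivity we decompose
\[
A(x,y) = A_{sym}(x,y) + B(x,y), \quad A_{sym}(x,y) = \tfrac{1}{2}(A(x,x) + A(y,y)),
\]
and estimate the two pieces separately. Since $\inf_x A(x,x) \geq \lambda$, we have $A_{sym} \geq \lambda$, so the $A_{sym}$-part of $\langle \mathcal{L}^s_A u, u\rangle$ is bounded below by $\lambda$ times the nonlocal bond-strain energy; an application of the nonlocal Korn-type inequality (valid on $V$ because $u$ is supported in the bounded set $\overline{\Omega}$) then gives a lower bound $\lambda c_K [u]^2_{H^s(\R^n)}$, where $c_K = c_K(n,s,\Omega) > 0$. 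For the remainder, symmetry $A(x,y) = A(y,x)$ lets us rewrite
\[
B(x,y) = \tfrac{1}{2}\bigl(A(x,y) - A(x,x)\bigr) + \tfrac{1}{2}\bigl(A(y,x) - A(y,y)\bigr),
\]
and two applications of \eqref{alpha-holder} (with $z=x$ and $z=y$) give $|B(x,y)| \leq \Lambda|x-y|^\alpha$. Hence the $B$-part of the bilinear form is controlled in absolute value by $\Lambda [u]^2_{H^{s-\alpha/2}(\R^n)}$. Splitting this integral into $|x-y|\leq 1$ and $|x-y|>1$ and combining with fractional Poincaré on $V$ (available since $u=0$ outside the bounded set $\Omega$) gives $[u]^2_{H^{s-\alpha/2}} \leq C(n,s,\alpha,\Omega) [u]^2_{H^s}$. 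Putting the two bounds together yields
\[
\langle \mathcal{L}^s_A u, u\rangle \geq (\lambda c_K - C\Lambda)\, [u]^2_{H^s},
\]
which is coercive precisely when $\lambda/\Lambda > C/c_K$; this threshold is the constant $C$ in the hypothesis.

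With coercivity established, the bilinear form is continuous on $V \times V$ (by the estimate $\|\mathcal{L}^s_A u\|_{H^{-s}} \leq \lambda^{-1}\|u\|_{H^s}$ recorded in the introduction) and strictly positive, so $E_A$ is strictly convex, continuous, and coercive. A minimizing sequence is therefore bounded in $V$, hence has a weakly convergent subsequence, and its weak limit is a minimizer by the weak lower semicontinuity of continuous convex functionals on Hilbert spaces. Testing the first variation against an arbitrary $\varphi \in C_c^\infty(\Omega,\R^n) \subset V$ produces the Euler-Lagrange identity $\langle \mathcal{L}^s_A u, \varphi\rangle = \langle f, \varphi\rangle$. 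The main obstacle throughout is the coercivity step; the qualitative input is the nonlocal Korn inequality, which upgrades the strain-type energy to the full $H^s$-seminorm, and the quantitative input is the Hölder bound on $A$, which reduces the sign-changing perturbation to an absorbable lower-order term when $\lambda/\Lambda$ is sufficiently large.
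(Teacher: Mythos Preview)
Your proof is correct and follows essentially the same approach as the paper: split $A$ into its diagonal part and a H\"older-small remainder, apply the fractional Korn inequality to the diagonal piece, and absorb the remainder as a lower-order term via Poincar\'e on $V$. The only cosmetic differences are that the paper uses the one-sided diagonal $A(x,x)$ rather than your symmetrized $\tfrac12(A(x,x)+A(y,y))$ (equivalent by the $x\leftrightarrow y$ symmetry of the form), and that the paper first reduces without loss of generality to $\alpha<2s$ so that the intermediate seminorm $|v|_{H^{s-\alpha/2}}$ is a genuine fractional seminorm, whereas your near/far splitting handles all $\alpha\in(0,1)$ directly.
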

\begin{proof}
It suffices to show that the energy $E_A$ has a minimizer. To that end, first notice that if $A\in \mathcal{A}(\alpha, \lambda, \Lambda)$, then for any $\beta\in (0, \alpha)$, we have
$A\in \mathcal{A}(\beta, \lambda, \tilde{\Lambda}_{\beta})$ for some $\tilde{\Lambda}_\beta>0$. This follows from the estimate that for any $\beta\in (0, \alpha)$ and any $x, y, z\in \mathbb{R}^n$ 
\[
|A(z, x) - A(z, y)| = |A(z, x) - A(z, y)|^{\beta\over \alpha} |A(z, x) - A(z, y)|^{1-{\beta\over \alpha}} \leq \left|{2\over \lambda}\right|^{1-{\beta\over \alpha}} \Lambda^{\beta\over \alpha}|x-y|^\beta. 
\]
Thus, without loss of generality we may assume that $0<\alpha<2s.$ Next, we show that 
$E_A: V \to \mathbb{R}$ coercive.  Indeed, for any $v\in V$,
\[
\begin{split}
E_A(v) &=  {1\over 2}\int_{\R^n}\int_{\R^n} A_{D}(x, x){|(v(y)-v(x))\cdot{(y-x)\over|y-x|}|^{2}\over |y-x|^{n+2s}} dy dx+ E_{A-A_{D}}(v)\\
&\geq {\lambda \over 2} \int_{\R^n}\int_{\R^n}{|(v(y)-v(x))\cdot{(y-x)\over|y-x|}|^{2}\over |y-x|^{n+2s}} dy dx-{\Lambda\over 2}\int_{\R^n}\int_{\R^n}{|(v(y)-v(x))\cdot{(y-x)\over|y-x|}|^{2}\over |y-x|^{n+2(s-{\alpha\over 2})}} dy dx\\
&\quad\quad-\|f\|_{H^{-s}}\|v\|_{H^{s}(\R^n)}.
\end{split}
\]
Using the fractional Korn's inequality in \cite{SM2019-Korn} which proves for any $s\in (0, 1)$ and $v\in H^{s}(\R^n, R^n)$, 
$
\int_{\R^n}\int_{\R^n}{|(v(y)-v(x))\cdot{(y-x)\over|y-x|}|^{2}\over |y-x|^{n+2s}} dy dx \approx |v|^{2}_{H^{s}(\R^n)}
$, we conclude that 
\[
E_A(v) \ageq {\lambda\over 2} |v|^{2}_{H^{s}(\R^n)} - {\Lambda\over 2} |v|^{2}_{H^{s-{\alpha\over 2}}(\R^n)} -\|f\|_{H^{-s}}\|v\|_{H^{s}(\R^n)}.
\]
Applying the fractional Poincar\'e inequality on $V$, we have $|v|^{2}_{H^{s-{\alpha\over 2}}(\R^n)}\geq C  \|v\|_{L^{2}(\R^n)}^{2}$, we have 
\[
{E_A(v) \over |v|^{2}_{H^{s}(\mathbb{R}^{n})}} \geq {\lambda \over 2} - {C\Lambda\over 2} {\|v\|_{L^{2}(\R^n)}^{2} \over |v|^{2}_{H^{s}(\mathbb{R}^{n})}} - {\|f\|_{H^{-s}} \over |v|_{H^{s}(\mathbb{R}^{n})}}.
\]
Another application of fractional Poincar\'e inequality implies that there exists $\tilde{C}>0$ such  for ${\lambda \over \Lambda} > \tilde{C}$ we have 
\[
{E_A(v) \over |v|^{2}_{H^{s}(\mathbb{R}^{n})}} > {\Lambda \over 4},\quad \text{ for all $v$ such that $|v|_{H^{s}(\mathbb{R}^{n})} \to \infty$}. 
\]
We can now apply standard direct method of calculus variations to demonstrate existence of a minimizing vector field. 
\end{proof}

\section{Commutator estimates}\label{s:commies}
In this section, we obtain estimates for the quantity $\mathcal{D}_{s_1,s_2}u$ defined in the previous section. To be precise, for $s\in (0, 1)$ such that $s_1 + s_2 = 2s$, $u\in H^{s}(\mathbb{R}^n, \R^{n})$, and $\varphi\in C_c(\mathbb{R}^{n}, \mathbb{R}^{n})$, we recall that 
\begin{equation}\label{1st-div}
\mathcal{D}_{s_1,s_2}(u,\varphi) = \langle \mathcal{L}^{s}_{A} u, \varphi\rangle -\langle \bar{\mathfrak{L}}_{A_D}^{s_1,s_2} u,\varphi \rangle.  
\end{equation}
Before we begin estimating this difference, let us first find a different characterization of the operator $\mathcal{L}^{s}_{A}u$ that uses Riesz potentials. To that end, 
for any $x,y \in \R^n$ and $\varphi\in C_c(\mathbb{R}^{n}, \mathbb{R}^{n})$, 
\begin{equation}\label{fn-via-RP}
\begin{split}
\varphi(x)-\varphi(y) &= 
c_2\brac{\lapms{s_2}\laps{s_2}\varphi(x)-\lapms{s_2}\laps{s_2}\varphi(y)}\\
&= c_2\int_{\R^n} \laps{s_2} \varphi(z_2)\brac{|x-z_2|^{s_2-n}-|y-z_2|^{s_2-n}}\, dz_2, 
\end{split}
\end{equation} 
for a constant $c_2$ that depends only on $s_2$ and $n$. 
This identity remains valid for $\varphi\in H^{s_2}(\mathbb{R}^{n}, \mathbb{R}^{n})$. Similarly, for any  $0\leq \eps<s_2$, we can write 
\[
\begin{split}
\varphi(x)-\varphi(y)&= c_2(\epsilon)\brac{\lapms{s_2-\eps}\laps{s_2-\eps}\varphi(x)-\lapms{s_2-\eps}\laps{s_2-\eps}\varphi(y)}\\
&= c_2(\epsilon)\int_{\R^n} \laps{s_2-\eps} \varphi(z_2) \brac{|x-z_2|^{s_2-\eps-n}-|y-z_2|^{s_2-\eps-n}}\, dz_2
\end{split}
\]
where $c_2(\epsilon)$ depends on $\eps$, in addition to $s_2$ and $n$. We note that $c_2(\epsilon)  = c_2 >0.$ 
Now we plug in $\varphi(x) - \varphi(y)$ and $u(x) - u(y)$ in 
\[
\langle \mathcal{L}^{s}_{A}u,\varphi\rangle =\int_{\mathbb{R}^{n}} \int_{\R^{n}} {A(x,y)\over |x-y|^{n+ 2s}} \left\langle {(x-y)\otimes (x-y)\over |x-y|^2} (u(x) - u(y)), \varphi(x) - \varphi(y)\right\rangle dx dy  
\]
and apply Fubini's theorem to obtain the expression that for any $\epsilon\in [0, s_2)$
\begin{equation}\label{Def-L(s,A)-interms-Rp}
\begin{split}
\langle \mathcal{L}^{s}_{A}u,\varphi\rangle = &\int_{\R^n}\int_{\R^n} \big\langle\mathbb{K}^{\epsilon}_{A}(z_1,z_2) \laps{s_1} u(z_1), \, \laps{s_2-\epsilon} \varphi(z_2)\big\rangle\, dz_1\, dz_2, 
 \end{split}
\end{equation}
where for any set function $B(x, y)$, and $0\leq \epsilon <s_2,$
\begin{equation}\label{defn-K}
\mathbb{K}^{\epsilon}_{B}(z_1, z_2) = \int_{\R^n}\int_{\R^n} \frac{B(x, y)}{|x-y|^{n+2s}} \kappa_{s_1}^{s_2 -\epsilon}(x, y, z_1, z_2) {(x-y)\otimes (x-y)\over |x-y|^2}dx dy 
\end{equation} with, $c(\epsilon) = c_1 \cdot  c_2(\epsilon)$,
\begin{equation}\label{defn-kappa}
\kappa_{s_1}^{s_2 - \epsilon }(x, y, z_1, z_2)= c(\epsilon)(|x -z_1|^{s_1-n}-|y-z_1|^{s_1-n})\, (|x-z_2|^{s_2-\epsilon -n}-|y-z_2|^{s_2-\epsilon-n}). 
\end{equation}
See \cite[Lemma 3.6.]{MSY21} for a rigorous justification of the above calculations.   With this at hand, for $\epsilon\in [0, s_2)$,  we introduce an intermediate operator $ A_{D} \mathcal{L}^{s}_{1}$ given by 
\[
\langle A_{D} \mathcal{L}^{s}_{1} u, \varphi\rangle := \int_{\R^n}\int_{\R^n} A_{D}(z_1)\big\langle\mathbb{K}^{0}_{1}(z_1,z_2) \laps{s_1} u(z_1), \, \laps{s_2} \varphi(z_2)\big\rangle\, dz_1\, dz_2
\]
where $\mathbb{K}^{0}_{1}$ is as given in \eqref{defn-K} with $\epsilon=0$ and $B=1$.

Now, for a given $\epsilon\in [0, s_2)$, we first write the difference $\mathcal{D}_{s_1,s_2}(u,\varphi)$ defined in \eqref{1st-div} as
\[
\begin{split}
\mathcal{D}_{s_1,s_2}(u,\varphi) &= \langle\mathcal{L}^{s}_{A} u -\left(A_{D}\mathcal{L}^{s}_{1}\right)u, \varphi\rangle + \langle\left(A_{D}\mathcal{L}^{s}_{1}\right)u, \varphi\rangle - \langle\bar{\mathfrak{L}}_{A_D}^{s_1,s_2} u, \varphi\rangle
\\
&= \mathcal{D}_{1}^{s}(u,\varphi) + \mathcal{D}^{s_1,s_2}_{2}(u,\varphi).
\end{split}
\]
We then have that 
\begin{equation}\label{2nd-div}
\langle\mathcal{L}^{s}_{A} u, \varphi\rangle  = \langle\bar{\mathfrak{L}}_{A_D}^{s_1,s_2} u, \varphi\rangle + \mathcal{D}_{1}^{s}(u,\varphi) + \mathcal{D}^{s_1,s_2}_{2}(u,\varphi). 
\end{equation}
The next two propositions estimate the last two terms of \eqref{2nd-div}. 
First, we estimate $\mathcal{D}_{1}^{s}(u,\varphi)=\langle\mathcal{L}^{s}_{A} u -\left(A_{D}\mathcal{L}^{s}_{1}\right)u, \varphi\rangle$.  
\begin{proposition}\label{pr:prop11}
Let $s \in (0,1)$ with $s_1 + s_2 = 2s$, $\alpha \in (0,1)$ and $\Lambda > 0$. Then there exist constants $\sigma_0 \in (0,\alpha]$ and a constant $c>0$ such that 
 the decomposition \eqref{2nd-div} holds and  for any $A\in \mathcal{A}(\alpha,\Lambda) = \{A:\mathbb{R}^{n}\times\mathbb{R}^{n}\to \mathbb{R}:  |A(x, y)|\leq \Lambda, \,\text{\eqref{alpha-holder} holds} \}$, any $\sigma \in (0,\sigma_0)$, and any $\eps \in (0,\frac{\sigma}{4})$, we have 
\[
|\mathcal{D}_{1}^{s}(u,\varphi)| \aleq  \int_{\R^n}\lapms{\sigma-\eps} |\laps{s_1}u|(x)\, |\laps{s_2-\eps}\varphi|(x)\, dx,
\]
and 
\[
|\mathcal{D}_{1}^{s}(u,\varphi)| \aleq  \int_{\R^n}\lapms{\sigma-\eps} |\laps{s_1-\eps}u|(x)\, |\laps{s_2}\varphi|(x)\, dx
\]
for all $u \in H^{s_1,p}(\R^n, \R^n)$ and $\varphi \in C^\infty_c(\R^n,\R^n)$.
\end{proposition}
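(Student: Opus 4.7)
The plan is to unify both terms of $\mathcal{D}_1^s(u,\varphi)$ into a single double integral with a common pairing, extract a pointwise bound on the resulting kernel from the H\"older hypothesis on $A$, and recognize the surviving convolution as a Riesz potential.

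First I would rewrite $\langle (A_D\mathcal{L}^s_1) u, \varphi\rangle$ in the $\mathbb{K}^\eps$-form used for $\langle \mathcal{L}^s_A u, \varphi\rangle$ in \eqref{Def-L(s,A)-interms-Rp}. Since $A_D(z_1) = A(z_1, z_1)$ depends only on $z_1$, it pulls out of the inner $(x,y)$-integral, and the same $\lapms{\eps}\laps{\eps}$ manipulation on $\varphi(x)-\varphi(y)$ that produced \eqref{Def-L(s,A)-interms-Rp} yields
\[
\langle (A_D\mathcal{L}^s_1) u, \varphi \rangle = \int_{\R^n}\int_{\R^n} A_D(z_1)\,\langle \mathbb{K}^\eps_1(z_1, z_2)\laps{s_1} u(z_1),\, \laps{s_2-\eps}\varphi(z_2)\rangle\, dz_1\, dz_2.
\]
Subtracting from \eqref{Def-L(s,A)-interms-Rp} and using that $\mathbb{K}^\eps_B$ is linear in $B$ collapses the difference to
\[
\mathcal{D}_1^s(u,\varphi) = \int_{\R^n}\int_{\R^n} \langle \mathbb{K}^\eps_{B_{z_1}}(z_1, z_2) \laps{s_1}u(z_1),\, \laps{s_2-\eps}\varphi(z_2)\rangle\, dz_1\, dz_2,
\]
with parametric coefficient $B_{z_1}(x,y) := A(x,y) - A(z_1, z_1)$.

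Next, I would establish a pointwise bound on $|\mathbb{K}^\eps_{B_{z_1}}(z_1, z_2)|$. By the symmetry $A(x,y) = A(y,x)$ and two applications of \eqref{alpha-holder} telescoped along $A(x,y) \to A(x,x) \to A(z_1, z_1)$, one obtains $|B_{z_1}(x,y)| \aleq |x-y|^\alpha + |x - z_1|^\alpha$. Substituted into \eqref{defn-K}, the $|x-y|^\alpha$ summand produces the improved factor $|x-y|^{-n-2s+\alpha}$ inside the inner integral, while the $|x-z_1|^\alpha$ summand combines with the first difference factor of $\kappa_{s_1}^{s_2-\eps}$ in \eqref{defn-kappa} to tame the behavior near $x = z_1$. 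A dyadic splitting of $(x,y)$ according to the relative sizes of $|x-y|$, $|x-z_1|$ and $|x-z_2|$, adapted from the commutator scheme of \cite{MSY21}, then delivers
\[
|\mathbb{K}^\eps_{B_{z_1}}(z_1, z_2)| \aleq |z_1 - z_2|^{\sigma - \eps - n}
\]
for every $\sigma \in (0, \sigma_0)$ and $\eps \in (0, \sigma/4)$, where $\sigma_0 \in (0,\alpha]$ is fixed small enough to absorb residual integrability losses from the three difference factors. Bounding the tensor pairing pointwise and applying Fubini converts the resulting convolution in $z_1$ with $|z_1-z_2|^{(\sigma-\eps)-n}$ into $\lapms{\sigma-\eps}|\laps{s_1}u|(z_2)$, which, after renaming $z_2 \mapsto x$, gives the first asserted inequality.

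The second inequality follows by the mirror construction, shifting the $\eps$-decomposition from $\varphi$ onto $u$: applying \eqref{fn-via-RP} to $u(x)-u(y)$ rearranges the identity into a pairing $\laps{s_1-\eps}u(z_1)\cdot\laps{s_2}\varphi(z_2)$ against an analogous kernel, and the same H\"older telescope yields the corresponding estimate. The principal obstacle is the pointwise kernel bound itself: one must balance the singular weight $|x-y|^{-n-2s}$ against the two difference factors of $\kappa_{s_1}^{s_2-\eps}$ and the H\"older gain, without losing more than a power $\sigma-\eps$; the $(x-y)\otimes(x-y)/|x-y|^2$ factor has operator norm $1$ and contributes no additional singularity, so the scalar commutator analysis of \cite{MSY21} transfers with only bookkeeping adjustments for the vector structure.
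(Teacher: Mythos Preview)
Your proposal is correct and follows essentially the same approach as the paper: both rewrite $\langle A_D\mathcal{L}^s_1 u,\varphi\rangle$ in the $\mathbb{K}^\eps$-form, subtract to obtain the kernel $\mathbb{M}^\eps(z_1,z_2)=\mathbb{K}^\eps_{B_{z_1}}(z_1,z_2)$ with $B_{z_1}(x,y)=A(x,y)-A(z_1,z_1)$, and then reduce to the scalar kernel estimate from \cite{MSY21}. The only difference is packaging: the paper observes that the pointwise upper bound on $|\mathbb{M}^\eps|$ coincides with the quantity in \cite[Lemma~3.5]{MSY21} and invokes \cite[Theorem~3.1]{MSY21} directly, whereas you sketch the H\"older telescope and dyadic decomposition that constitute that cited argument.
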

\begin{proof}
First notice from the definition of the operator $A_D \mathcal{L}^{s}_1$ and integrating in the $z_2$ variable that for any $\epsilon\in[0, s_2)$
\[
\langle A_{D} \mathcal{L}^{s}_{1} u, \varphi\rangle = \int_{\R^n}\int_{\R^n} A_{D}(z_1)\big\langle\mathbb{K}^{\epsilon}_{1}(z_1,z_2) \laps{s_1} u(z_1), \, \laps{s_2-\epsilon} \varphi(z_2)\big\rangle\, dz_1\, dz_2, 
\]
and so using the \eqref{Def-L(s,A)-interms-Rp} and the definition of $A_{D} \mathcal{L}^{s}_{1}$, we have that 
\begin{equation}\label{defn-D-wo-Epsilon}
\mathcal{D}_{1}^{s}(u,\varphi) = \int_{\R^n}\int_{\R^n} \big\langle\mathbb{M}^{\epsilon}(z_1, z_2) \laps{s_1} u(z_1), \, \laps{s_2-\epsilon} \varphi(z_2)\big\rangle\, dz_1\, dz_2
\end{equation}
where 
\[
\begin{split}
\mathbb{M}^{\epsilon}(z_1, z_2) &= \mathbb{K}^{\epsilon}_{A}(z_1,z_2) - A_{D}(z_1)\mathbb{K}^{\epsilon}_{1}(z_1,z_2)\\
&=\int_{\R^n}\int_{\R^n} \frac{A(x, y) -A_{D}(z_1)}{|x-y|^{n+2s}} \kappa_{s_1}^{s_2 -\epsilon}(x, y, z_1, z_2) {(x-y)\otimes (x-y)\over |x-y|^2}dx dy 
\end{split}
\] and $\kappa_{s_1}^{s_2 -\epsilon}$ as defined in \eqref{defn-kappa}. It then follows that 
\[
|\mathbb{M}^{\epsilon}(z_1, z_2)| \leq \int_{\mathbb{R}^{n}}\int_{\mathbb{R}^n} |A(x, y) -  A(z_1, z_2)|{| \kappa_{s_1}^{s_2 -\epsilon}(x, y, z_1, z_2)| \over |x-y|^{n+2s}}| dx dy, 
\]
and as a consequence, 
\[
|\mathcal{D}_{1}^{s}(u,\varphi)|\leq \int_{\R^n}\int_{\R^n}|\mathbb{M}^{\epsilon}(z_1, z_2)| |\laps{s_1} u(z_1)||\laps{s_2 -\epsilon} \varphi(z_2)| dz_1 dz_2.
\]
We observe that the upper bound of $|\mathbb{M}^{\epsilon}(z_1, z_2)|$ is exactly the quantity that appear in \cite[Lemma 3.5]{MSY21}, and so  for $\sigma>0$ small enough, the inequality 
\[
|\mathcal{D}_{1}^{s}(u,\varphi)| \aleq  \int_{\R^n}\lapms{\sigma-\eps} |\laps{s_1}u|(x)\, |\laps{s_2-\eps}\varphi|(x)\, dx.
\]
follows from \cite[Theorem 3.1]{MSY21}. 
The other estimate follows the same way by reversing the role of $u$ and $\varphi$. 	
\end{proof}
Next we estimate $ \mathcal{D}^{s_1,s_2}_{2}(u,\varphi) = \langle A_{D} \mathcal{L}^{s}_{1} u, \varphi\rangle -\langle\bar{\mathfrak{L}}_{A_D}^{s_{1},s_2} u, \varphi\rangle$.  
Recall that  the operator $A_{D} \mathcal{L}^{s}_{1}$ is defined as follows: 
\[
\langle A_{D} \mathcal{L}^{s}_{1} u, \varphi\rangle = \int_{\R^n}\int_{\R^n} A_{D}(z_1)\big\langle\mathbb{K}^{0}_{1}(z_1,z_2) \laps{s_1} u(z_1), \, \laps{s_2} \varphi(z_2)\big\rangle\, dz_1\, dz_2
\]
Let us obtain a compact form of the action of  the operator $A_{D} \mathcal{L}^{s}_{1}$ on vector fields. 
Denoting 
$U := \laps{s_1} u$ and $V := \laps{s_2} \varphi$,
it follows that 
\[
\begin{split}
\langle A_{D} &\mathcal{L}^{s}_{1} u, \varphi\rangle = \int_{\R^n}\int_{\R^n} A_{D}(z_1)\big\langle\mathbb{K}^{0}_{1}(z_1,z_2) U(z_1), \, V(z_2) \big\rangle\, dz_1\, dz_2 \\
=&\int_{\R^n}\int_{\R^n} \left\langle {(x-y)\otimes (x-y)\over |x-y|^{n+2(s+1)}}  (\lapms{s_1} \brac{A_{D}U}(x)-\lapms{s_1} \brac{A_{D}U}(y)), \lapms{s_2}V(x)-\lapms{s_2}V(y)  \right\rangle dx dy\\
\end{split}
\]
Observe the following for $\gamma_1 = (n+2s-2)(n+2s)$ and $\gamma_2 = (n+2s-2)$
\begin{equation}\label{eq:xixjcomp}
\nabla^{2} \left(\frac{1}{|x-y|^{n+2s-2}}\right) = \gamma_1 {(x-y)\otimes (x-y)\over |x-y|^2} \frac{1}{|x-y|^{n+2s}} - \gamma_{2} \frac{1}{|x-y|^{n+2s}}\mathbb{I}.
\end{equation}
It then follows that 
{\small \begin{equation*}
\begin{aligned}
\langle A_{D} \mathcal{L}^{s}_{1} u, \varphi\rangle&\overset{\eqref{eq:xixjcomp}}{=} \int_{\R^n}\int_{\R^n} {\tilde{\gamma}_2 \over |x-y|^{n+2s}} (\lapms{s_1} \brac{A_{D}U}(x)-\lapms{s_1} \brac{A_{D}U}(y))\cdot (\lapms{s_2}V(x)-\lapms{s_2}V(y))  dx dy\\
&+\tilde{\gamma}_1\int_{\R^n}\int_{\R^n}\left\langle\nabla^{2} \left(\frac{1}{|x-y|^{n+2s-2}}\right) (\lapms{s_1} \brac{A_{D}U}(x)-\lapms{s_1} \brac{A_{D}U}(y)), \lapms{s_2}V(x)-\lapms{s_2}V(y)  \right\rangle
\\
=&\int_{\R^n} \tilde{\gamma}_2 A_D(z) U(z) \cdot V(z) + \tilde{\gamma}_1 \left\langle \nabla^2\lapms{2} (A_{D} U)(z), V(z) \right\rangle\,dz\\
=&\int_{\R^n} \brac{\tilde{\gamma}_2 A_D(z) U(z) + \tilde{\gamma}_1 \Rz\otimes \Rz (A_{D} U)(z)}\cdot V(z)\, dz.
\end{aligned}
\end{equation*}}
The precise value of constants $\tilde{\gamma}_1$ and $\tilde{\gamma}_2$ are  computed in \cite{Scott22} and verify that $\tilde{\gamma}_1\neq \tilde{\gamma}_2$.   For the argument to follow, the exact value is not as important, but these are the constants that appear in the regular operator $\bar{\mathfrak{L}}_{A_D}^{s_{1},s_2}. $ 
As a consequence, the expression for $ \mathcal{D}^{s_1,s_2}_{2}(u,\varphi) $ simplifies to 
\[
\begin{split}
 \mathcal{D}^{s_1,s_2}_{2}(u,\varphi) &= \tilde{\gamma}_1\int_{\R^n}  \langle \left(\Rz\otimes \Rz (A_{D} \laps{s_1} u)(z)- A_{D}  \Rz\otimes \Rz ( \laps{s_1} u)(z)\right),\ \laps{s_2} \varphi(z)\rangle dz \\
 &= \tilde{\gamma}_1\int_{\R^n} \langle[\Rz\otimes \Rz,  A_{D}]( \laps{s_1} u)(z), \laps{s_2} \varphi(z)\rangle dz
\end{split}
\] 
where we used 
the commutator notation
\[
 [T,b](f) = T(bf)-bTf.
\]
We normalize the constant and assume that $\tilde{\gamma}_1=1.$ The next proposition estimates $ \mathcal{D}^{s_1,s_2}_{2}(u,\varphi)$. 
\begin{proposition}\label{pr:prop12}
Let $s \in (0,1)$ with $s_1 + s_2 = 2s$ and $\alpha>0$ . For $u \in H^{s_1,p}(\R^n)$ and $\varphi \in C^\infty_c(\R^n)$, let 
\[
\begin{split}
 \mathcal{D}^{s_1,s_2}_{2}(u,\varphi) = \int_{\R^n} \langle[\Rz\otimes \Rz,  A_{D}]( \laps{s_1} u)(z), \laps{s_2} \varphi(z)\rangle dz. 
 \end{split}
\]
Then, there exists $c>0$ such that \eqref{2nd-div} holds and 
 for any $\eps \in (0,\alpha)$ 
 and for any $A\in \mathcal{A}(\alpha,\Lambda) = \{A:\mathbb{R}^{n}\times\mathbb{R}^{n}\to \mathbb{R}:  |A(x, y)|\leq \Lambda, \,\text{\eqref{alpha-holder} holds} \}$
we have the estimates
\[
| \mathcal{D}^{s_1,s_2}_{2}(u,\varphi)| 
	\leq  \int_{\R^n} \lapms{\alpha-\eps}|\laps{s_1-\eps}u|(z)\, |\laps{s_2}\varphi|(z)\, dz, 
\]
and
\[
| \mathcal{D}^{s_1,s_2}_{2}(u,\varphi)| 
	\leq \int_{\R^n} \lapms{\alpha-\eps}|\laps{s_1}u|(z)\, |\laps{s_2-\eps}\varphi|(z)\, dz
\]
for all $u \in H^{s_1,p}(\R^n)$ and $\varphi \in C^\infty_c(\R^n)$.
\end{proposition}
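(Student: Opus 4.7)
The plan is to represent the commutator $[\Rz\otimes\Rz, A_D]$ as a singular integral, use the H\"older continuity of $A_D$ to reduce its kernel to one of fractional-integral type, and then redistribute derivatives via a fractional integration-by-parts identity to produce the $\eps$-gain.

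\textbf{Step 1: Commutator as a singular integral.} The operator $\Rz\otimes\Rz$ is a Calder\'on--Zygmund operator of convolution type whose kernel $\mathfrak{K}(z)$ is smooth off the origin, homogeneous of degree $-n$ with zero mean on spheres, so $|\mathfrak{K}(z)| \aleq |z|^{-n}$. Hence
\[
[\Rz\otimes\Rz, A_D](g)(z) = \text{p.v.}\int_{\R^n}\mathfrak{K}(z-w)\bigl(A_D(w)-A_D(z)\bigr)\,g(w)\,dw.
\]
From the symmetry $A(x,y)=A(y,x)$ and \eqref{alpha-holder}, the trace $A_D(x)=A(x,x)$ is itself $\alpha$-H\"older: $|A_D(z)-A_D(w)|\leq|A(z,z)-A(z,w)|+|A(w,z)-A(w,w)|\aleq\Lambda|z-w|^\alpha$. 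The integrand above is thus dominated by $C\Lambda|z-w|^{-n+\alpha}|g(w)|$, which yields the naive pointwise bound $|[\Rz\otimes\Rz,A_D](g)(z)|\aleq\lapms{\alpha}|g|(z)$.

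\textbf{Step 2: $\eps$-redistribution.} Applied with $g=\laps{s_1}u$, the naive bound yields only the $\eps=0$ case. To gain $\eps>0$ I would split $\laps{s_1}u=\laps{\eps}(\laps{s_1-\eps}u)$ and transfer the $\laps{\eps}$ onto the commutator kernel via the symmetry relation
\[
\int_{\R^n}h(w)\,\laps{\eps}f(w)\,dw=\int_{\R^n}\laps{\eps}h(w)\,f(w)\,dw,
\]
applied with $f=\laps{s_1-\eps}u$ and $h(w)=\mathfrak{K}(z-w)(A_D(w)-A_D(z))$ at each fixed $z$. The key pointwise kernel bound is then
\[
\bigl|\laps{\eps}_w\bigl[\mathfrak{K}(z-\cdot)(A_D(\cdot)-A_D(z))\bigr](w)\bigr|\aleq|z-w|^{-n+\alpha-\eps},\qquad \eps\in(0,\alpha),
\]
which I would prove by inserting the pointwise representation of $\laps{\eps}$ and splitting the $w'$-integration at the scale $|w'-w|\sim\tfrac{1}{2}|z-w|$: on the near side the H\"older gain of $A_D$ combines with the smoothness of $\mathfrak{K}(z-\cdot)$ away from $z$; on the far side the raw estimate $|\mathfrak{K}(z-w')(A_D(w')-A_D(z))|\aleq|z-w'|^{-n+\alpha}$ together with the $|w-w'|^{-n-\eps}$ decay gives absolute convergence of the required order. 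Integrating against $|\laps{s_2}\varphi|$ in $z$ then yields the first claim. The second estimate follows symmetrically, by instead writing $\laps{s_2}\varphi=\laps{\eps}(\laps{s_2-\eps}\varphi)$ and shifting $\laps{\eps}$ onto the $z$-variable, using the self-adjointness of $\Rz\otimes\Rz$.

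\textbf{Main obstacle.} The delicate point is the rigorous justification of the fractional integration-by-parts and of the principal-value meaning of $\laps{\eps}$ acting on $h_z(\cdot)=\mathfrak{K}(z-\cdot)(A_D(\cdot)-A_D(z))$, which is only H\"older continuous near $w=z$ and merely $L^\infty$-decaying at infinity. I would first establish the kernel bound and the transfer identity for $u\in\mathcal{S}(\R^n)$ and a smooth mollification of $A_D$, then extend by density to $u\in H^{s_1,p}(\R^n)$ and to $A\in\mathcal{A}(\alpha,\Lambda)$, using the Calder\'on--Zygmund boundedness of $\Rz\otimes\Rz$ on $L^p$ and the Sobolev estimates in Lemma \ref{S-I}. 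An alternative, paralleling the treatment of $\mathcal{D}_1^s$ in Proposition \ref{pr:prop11}, would be to recast the whole bilinear form as a double integral against a kernel of the type analyzed in \cite[Theorem 3.1]{MSY21} and invoke that framework directly.
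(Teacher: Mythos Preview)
Your approach is essentially the same as the paper's. Both arguments write the commutator as a singular integral with kernel $\mathfrak{K}(z-w)(A_D(w)-A_D(z))$, transfer a factor of $\laps{\eps}$ onto this kernel via fractional integration by parts, and then establish the pointwise bound $\aleq |z-w|^{-n+\alpha-\eps}$ for the resulting kernel by a near/far decomposition at scale $\sim|z-w|$. The only cosmetic difference is that the paper moves $\laps{\eps}$ off the $\varphi$-side (acting in the $z$-variable) and proves the second inequality first, whereas you move $\laps{\eps}$ off the $u$-side (acting in the $w$-variable) and prove the first inequality first; each then obtains the remaining inequality ``by reversing the roles of $u$ and $\varphi$''. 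The paper's three-case split (their Case~1 versus Cases~2--3) is exactly your near/far dichotomy, with the far region subdivided according to which of $|y-z|$, $|y-\tilde z|$ dominates --- a refinement you would inevitably encounter when writing out the far-side integral.
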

\begin{proof}
	By applying an integration by parts, we get
	{\small \[
	\begin{aligned}
	\mathcal{D}^{s_1,s_2}_{2}(u,\varphi) =& \int_{\R^n} \langle\laps{\eps}([\Rz\otimes \Rz, A_{D}] (\laps{s_1} u))(z),  \laps{s_2-\eps} \varphi(z)\rangle\, dz\\
    =& \int_{\R^n}\int_{\R^n}  \left\langle\frac{[\Rz \otimes \Rz, A_{D}] (\laps{s_1} u)(z) - [\Rz\otimes \Rz, A_{D}] (\laps{s_1} u)(\tilde{z})}{|z-\tilde{z}|^{n+\eps}}, \,\laps{s_2-\eps}\varphi(z)\,\right\rangle \,d\tilde{z}\, dz.
	\end{aligned}
	\]}
	Notice that
	\[
	\begin{split}
	[\Rz\otimes \Rz, A_{D}](\laps{s_1} u)(z)
	 =& \Rz\otimes \Rz (A_{D}\laps{s_1}u )(z) - A_{D}\Rz\otimes\Rz (\laps{s_1}u)(z)\\
	 =& \int_{\R^n} \frac{\Omega(y-z)}{|y-z|^n}(A(y,y)-A(z,z))\laps{s_1}u(y)\,dy.
	\end{split}
	\]
    where we are using the notation $\Omega(\xi)= {\xi\otimes \xi\over |\xi|^2}$. We recall  that ${\Omega(\xi)\over |\xi|^{n}}$ is the Calder\'on-Zygmund kernel for the second order matrix of Riesz transform $\Rz\otimes \Rz$, see \eqref{eq:xixjcomp}. Thus we have, 
    \[
	\begin{split}
	\mathcal{D}^{s_1,s_2}_{2}(u,\varphi) =\int_{\R^n}\int_{\R^n} \left\langle\mathbb{W}^\epsilon(y, z)\laps{s_1}u(y),  \laps{s_2-\eps}\varphi(z)\right\rangle dy dz
    \end{split}
    \]
 where  
    \[
   \mathbb{W}^\epsilon(y, z):= \int_{\R^n} \frac{1}{|z-\tilde{z}|^{n+\eps}}\brac{\frac{\Omega(y-z)}{|y-z|^n}(A(y,y)-A(z,z)) - \frac{\Omega(y-\tilde{z})}{|y-\tilde{z}|^n}(A(y,y)-A(\tilde{z},\tilde{z}))} \,d\tilde{z}.
    \]
     We claim that
    \[
       | \mathbb{W}^\epsilon(y, z)| \aleq |y-z|^{\alpha-\eps-n}.
    \]
  Assume the claim is proved for now. We then have  
	\[
	\begin{split}
	|\mathcal{D}^{s_1,s_2}_{2}(u,\varphi)| \aleq& \int_{\R^n} 
	\brac{\int_{\R^n} |z-y|^{\alpha-\eps-n}\, |\laps{s_1}u|(y)\,dy} 
	\, |\laps{s_2-\eps} \varphi|(z)\, dz\\
	=& \int_{\R^n} \lapms{\alpha-\eps}|\laps{s_1}u|(z)\, |\laps{s_2-\eps}\varphi|(z)\, dz.
	\end{split}
	\]
    Hence, we obtain the second estimate for $\mathcal{D}^{s_1,s_2}_{2}(u,\varphi)$. 
    By reversing the role of $u$ and $\varphi$, the first estimate follows the same way.

	What remains is to prove the claim. To that end, we  divide the domain into three cases. 
 
	\underline{Case 1} $|z-\tilde{z}| < \frac{1}{2}|y-z|$ or $|z-\tilde{z}| < \frac{1}{2}|y-\tilde{z}|$.
	
	We first consider
    \[\begin{split}
       & \Big| \frac{\Omega(y-z)}{|y-z|^n}(A(y,y)-A(z,z)) - \frac{\Omega(y-\tilde{z})}{|y-\tilde{z}|^n}(A(y,y)-A(\tilde{z},\tilde{z})) \Big|\\
        &\leq \Big| \brac{\frac{\Omega(y-z)}{|y-z|^n} -  \frac{\Omega(y-\tilde{z})}{|y-\tilde{z}|^n}} (A(y,y)-A(z,z)) \Big| + \Big|\frac{\Omega(y-\tilde{z})}{|y-\tilde{z}|^n}(A(\tilde{z},\tilde{z})-A(z,z)) \Big|.
    \end{split}
    \]
    Since in this case we have $|y-z| \aeq |y-\tilde{z}|$, we can use the application of the fundamental theorem of calculus (see \cite[Lemma 3.2]{MSY21}) to obtain
    \[
    \begin{split}
        |\mathbb{W}^\epsilon(y, z)| \aleq& \int_{\R^n} \frac{|z-\tilde{z}|}{|z-\tilde{z}|^{n+\eps}|y-z|^{n+1}}|A(y,y)-A(z,z)|  \,d\tilde{z}\\
        & + \int_{\R^n} \frac{1}{|z-\tilde{z}|^{n+\eps}|y-z|^{n}}|A(\tilde{z},\tilde{z})-A(z,z)|  \,d\tilde{z}\\
        \aleq& \int_{\R^n} \frac{|z-\tilde{z}|}{|z-\tilde{z}|^{n+\eps}|y-z|^{n+1}}|y-z|^\alpha  \,d\tilde{z} 
        + \int_{\R^n} \frac{1}{|z-\tilde{z}|^{n+\eps}|y-z|^{n}}|z-\tilde{z}|^\alpha  \,d\tilde{z}
    \end{split}
    \]
    where the second inequality above follows from the $\alpha$-H\"older continuous of $A$. Then we integrate w.r.t. $\tilde{z}$ and get   
    \[
    \begin{split}
        | \mathbb{W}^\epsilon(y, z)|  \aleq& \int_{|z-\tilde{z}| \aleq |y-z|} \frac{|z-\tilde{z}|^{1-\eps-n}}{|y-z|^{n+1-\alpha}}  \,d\tilde{z} + \int_{|z-\tilde{z}| \aleq |y-z|} \frac{|z-\tilde{z}|^{\alpha-\eps-n}}{|y-z|^n} \,d\tilde{z}\\
        \aleq& |y-z|^{1-\eps}|y-z|^{\alpha-1-n} +|y-z|^{\alpha-\eps-n}\\
        \aleq& |y-z|^{\alpha-\eps -n}.
    \end{split}
    \]
	\underline{Case 2} $|z-\tilde{z}| \ge \frac{1}{2}|y-z|$ and $|z-\tilde{z}| \ge \frac{1}{2}|y-\tilde{z}|$ and $|y-z| < |y-\tilde{z}|$.
	
	Since $A$ is $\alpha$-H\"older continuous, we have
    \[
    \begin{split}
        | \mathbb{W}^\epsilon(y, z)| \aleq& \int_{|z-\tilde{z}| \ageq |y-z|} \frac{1}{|z-\tilde{z}|^{n+\eps}}\brac{|y-z|^{\alpha-n} + |y-\tilde{z}|^{\alpha-n}} \,d\tilde{z}\\
        \leq& |y-z|^{\alpha-n} \int_{|z-\tilde{z}| \ageq |y-z|} \frac{1}{|z-\tilde{z}|^{n+\eps}} \,d\tilde{z}\\
        \aeq& |y-z|^{\alpha-\eps-n}.
    \end{split}
    \]
	\underline{Case 3} $|z-\tilde{z}| \ge \frac{1}{2}|y-z|$ and $|z-\tilde{z}| \ge \frac{1}{2}|y-\tilde{z}|$ and $|y-z| \ge |y-\tilde{z}|$.

    We have
	\[
    \begin{split}
        | \mathbb{W}^\epsilon(y, z)| \aleq& \int_{|z-\tilde{z}| \ageq |y-z|} \frac{1}{|z-\tilde{z}|^{n+\eps}}\brac{|y-z|^{\alpha-n} + |y-\tilde{z}|^{\alpha-n}} \,d\tilde{z}\\
        \aleq& |y-z|^{-\eps-n} \int_{|y-\tilde{z}| \leq |y-z|} |y-\tilde{z}|^{\alpha-n} \,d\tilde{z}\\
        \aeq& |y-z|^{\alpha-\eps-n}.
    \end{split}
    \]
	That completes the proof of the proposition. 
\end{proof}

\section{The weighted fractional Lam\'e  system}\label{sec-Base-system}
In this section, we prove an optimal regularity result for the system of equations
\begin{equation} \label{weighted-FLAME}
\bar{\mathfrak{L}}_{\bar{A}}^{t,2s-t} u = \laps{2s-t} f_1   + f_2
\end{equation}
where $\bar{A}$ is a positive, measurable function that is bounded from below and above by positive constants, and  $\bar{\mathfrak{L}}_{\bar{A}}^{t,2s-t}$ is as defined in \eqref{fractional-Lame}, and can be understood as the operator 
\[
 \laps{2s-t}\left(\bar{A}(z)[\laps{t} u(z) +c\Rz\otimes \Rz \laps{s} u(z)]\right).
\]
where after scaling in \eqref{fractional-Lame}, we assume that $c\neq 1$.  The following is an {\em a priori} regularity estimate that we will use as an iterative device to obtain the optimal regularity result for the \eqref{weighted-FLAME}. 

\begin{proposition}\label{intermediate-regular-Klaps}
Let ${s\in (0,1)}$ and $t \in (0,2s)$ such that $2s-t < 1$
Suppose that
$\bar{A}: \R^n \to \R$ is a positive, measurable, and bounded from above and below, i.e.
\[
\Lambda^{-1} \leq \bar{A}(z) \leq  \Lambda \quad \text{a.e. }z \in \R^n.
\]
Assume that for some $q \in (1,\infty)$, $\laps{t} u \in L^q(\R^n,\R^n)$ is a distributional solution to
\begin{equation} \label{tested-weighted}
\begin{split}
 & \int_{\R^n} \bar{A}(z) \left\langle \laps{t} u (z)+c\Rz\otimes \Rz \laps{t} u(z), \laps{2s-t} \varphi(z)\right \rangle\, dz \\
  &= \int_{\R^n} \langle f_1(z),\, \laps{2s-t} \varphi(z)\rangle\, dz + \int_{\R^n} \langle f_2(z),\, \varphi(z)\rangle\, dz,\quad \quad \forall \varphi \in C_c^\infty(\Omega, \R^n)
  \end{split}
\end{equation}
where $c \neq 1$.  Suppose now that $\Omega_1 \subsubset \Omega_2 \subsubset \Omega\subset \R^n$. Then 
\begin{itemize}
\item[a)] there exists $\bar{\mathfrak{q}}$ such that $\bar{\mathfrak{q}}>q > \frac{n \bar{\mathfrak{q}}}{n+(2s-t)\bar{\mathfrak{q}}}>1$ such that 
 if $f_1,f_2 \in  L^q(\R^n,\R^n) \cap L^{\bar{\mathfrak{q}}}(\Omega_2, \R^n)$, then $\laps{t} u \in L^{\bar{\mathfrak{q}}}(\Omega_1, \R^n)$ and 
 \[
\|\laps{t} u\|_{L^{\bar{\mathfrak{q}}}(\Omega_1)} \aleq \sum_{j=1}^{2}\left(\|f_j\|_{L^{\bar{\mathfrak{q}}}(\Omega_2)} + \|f_j\|_{L^q(\R^n)}\right) 
+\|\laps{t} u\|_{L^q(\R^n)}.
 \]

\item[b)] for any \text{$p>q$, and $r\in (1, p)$ such that $r>\frac{n p}{n+(2s-t)p} >1,$} 
 if $f_1,f_2 \in  L^q(\R^n,\R^n) \cap L^p(\Omega_2, \R^n)$,    then the $L^{p}$ norm of $\laps{t} u$
 can be estimated as  
{ \begin{equation}\label{eq:asdk:goalLp}
 \|\laps{t} u\|_{L^p(\Omega_1)} \aleq \sum_{j=1}^{2}\left(\|f_j\|_{L^p(\Omega_2)} + \|f_j\|_{L^q(\R^n)}\right) + \|\laps{t} u\|_{L^r(\Omega_2)} +\|\laps{t} u\|_{L^q(\R^n)}.
\end{equation}}
\end{itemize}
\end{proposition}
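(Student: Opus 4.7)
Set $U := \laps{t}u$ and $T := I + c\,\Rz\otimes \Rz$. The hypothesis $c\neq 1$ is precisely the statement that the matrix-valued Fourier symbol of $T$ is invertible pointwise in $\xi$, so $T$ and $T^{-1}$ are both bounded Calder\'on--Zygmund operators on every $L^p$, $1<p<\infty$; in particular $\|U\|_{L^p}\aeq \|TU\|_{L^p}$. Reading the left-hand side of \eqref{tested-weighted} as $\langle\laps{2s-t}(\bar A\, TU),\varphi\rangle$, the equation becomes $\laps{2s-t}(\bar A\, TU)=\laps{2s-t}f_1+f_2$ in $\mathcal D'(\Omega)$, and since $\bar A$ is bounded above and below, a local $L^p$ bound on $\bar A\, TU-f_1$ gives the same for $\laps{t}u$. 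I would obtain such a bound by duality against an arbitrary $\Psi\in C_c^\infty(\Omega_1,\R^n)$ of unit $L^{p'}$-norm.

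\textbf{Duality and test-function surgery.} Fix nested cut-offs $\eta_0 \prec \eta_1 \prec \eta_2$ with $\eta_0\equiv 1$ on $\Omega_1$ and $\supp\eta_2\subset \Omega_2$. Given $\Psi\in C_c^\infty(\Omega_1,\R^n)$, the function $\varphi := \eta_2\,\lapms{2s-t}\Psi$ lies in $C_c^\infty(\Omega_2,\R^n)$ and is therefore an admissible test function in \eqref{tested-weighted}. Using the identity
\begin{equation*}
\laps{2s-t}\varphi = \Psi - \laps{2s-t}\bigl((1-\eta_2)\lapms{2s-t}\Psi\bigr),
\end{equation*}
I would plug $\varphi$ into \eqref{tested-weighted} and rearrange to obtain
\begin{equation*}
\int (\bar A\, TU - f_1)\cdot \Psi\, dz = \int (\bar A\, TU - f_1)\cdot \Phi_\Psi\, dz + \int f_2\cdot \eta_2\,\lapms{2s-t}\Psi\, dz,
\end{equation*}
with $\Phi_\Psi := \laps{2s-t}\bigl((1-\eta_2)\lapms{2s-t}\Psi\bigr)$. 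Taking the supremum over $\|\Psi\|_{L^{p'}}\leq 1$ will deliver the sought-after local $L^p$ control.

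\textbf{Right-hand estimates.} The term $\int f_2\cdot \eta_2\lapms{2s-t}\Psi\,dz$ is controlled by $\|f_2\|_{L^p(\Omega_2)}\,\|\lapms{2s-t}\Psi\|_{L^{p'}(\Omega_2)} \aleq \|f_2\|_{L^p(\Omega_2)}\|\Psi\|_{L^{p'}}$ via the local Sobolev estimates \eqref{eq:sob:loc1}--\eqref{eq:sob:loc2} of Lemma \ref{S-I} (the $L^q(\R^n)$ piece of $f_2$ enters through the tail of the Riesz potential). For the nonlocal error I split $\Phi_\Psi = (1-\eta_1)\Phi_\Psi + \eta_1\Phi_\Psi$. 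Estimate \eqref{prop2.4-parta} of Lemma \ref{Prop2.4MSY} yields $\|(1-\eta_1)\Phi_\Psi\|_{L^{q'}(\R^n)}\aleq \|\Psi\|_{L^{p'}}$, which pairs with the global $L^q$ assumptions on $U$ (hence $TU$) and $f_1$. Estimate \eqref{prop2.4-partb} applies precisely under the Sobolev threshold $r>\tfrac{np}{n+(2s-t)p}$ and yields $\|\eta_1\Phi_\Psi\|_{L^{r'}(\Omega)}\aleq \|\Psi\|_{L^{p'}}$, which pairs with the local $L^r(\Omega_2)$ a priori information on $U$ and the $L^p(\Omega_2)\hookrightarrow L^r(\Omega_2)$ bound on $f_1$. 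Summing these and inverting $T$ produces \eqref{eq:asdk:goalLp}.

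\textbf{Part (a) and the main obstacle.} Part (a) is part (b) specialized to $p=\bar{\mathfrak{q}}$ and $r=q$: the constraint $q>\tfrac{n\bar{\mathfrak{q}}}{n+(2s-t)\bar{\mathfrak{q}}}>1$ is exactly the admissibility of \eqref{prop2.4-partb} with these exponents, satisfied by choosing $\bar{\mathfrak{q}}$ slightly larger than $q$, while the local $L^r(\Omega_2)=L^q(\Omega_2)$ input comes for free by restricting the global $L^q(\R^n)$ assumption on $\laps{t}u$. The principal obstacle is the coordinated use of the two parts of Lemma \ref{Prop2.4MSY}: the far-field piece \eqref{prop2.4-parta} is free of Sobolev restrictions but only provides $L^{q'}$ control, matching an unbounded-domain $L^q$ tail; the near-field piece \eqref{prop2.4-partb} unlocks the stronger $L^{r'}$ duality exponent but lives on a bounded set and only under the threshold on $r$. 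Threading both through the duality identity, managing the three scales of cut-offs $\eta_0 \prec \eta_1\prec \eta_2$ so that each region matches the correct integrability of $U$ and $f_1$, and doing so without losing compatibility with the $L^p$ bounds on $T^{\pm 1}$, is the technical heart of the argument.
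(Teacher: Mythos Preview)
Your overall plan is sound and coincides with the paper's: duality, building a test function from $\lapms{2s-t}\Psi$ via a cutoff, splitting the resulting error with the two halves of Lemma~\ref{Prop2.4MSY}, and deducing (a) from (b) by taking $p=\bar{\mathfrak q}$, $r=q$. There is, however, one genuine gap. The sentence ``inverting $T$ produces \eqref{eq:asdk:goalLp}'' is where the coupling $c\neq 0$ actually bites: your duality argument bounds $\|\bar A\,TU\|_{L^p(\Omega_1)}$, hence $\|TU\|_{L^p(\Omega_1)}$, but $T^{-1}$ is a \emph{nonlocal} Calder\'on--Zygmund operator, so the global equivalence $\|U\|_{L^p(\R^n)}\aeq\|TU\|_{L^p(\R^n)}$ does not restrict to $\Omega_1$. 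The same nonlocality is hidden one step earlier, when you pair the near-field piece $\eta_1\Phi_\Psi$ against $\bar A\,TU$ and invoke ``the local $L^r(\Omega_2)$ a~priori information on $U$'': it is $TU$, not $U$, that must lie in $L^r$ on $\supp\eta_1$. Both directions are fixed by the same cut-and-tail device (write $U=T^{-1}(\eta\,TU)+T^{-1}((1-\eta)TU)$, use global $L^p$-boundedness on the first piece and the $|x-y|^{-n}$ kernel decay across disjoint supports on the second), at the cost of one more nested domain and the tail $\|U\|_{L^q(\R^n)}$ already present in \eqref{eq:asdk:goalLp}. The paper isolates exactly this as a stand-alone local invertibility statement (Lemma~\ref{U=lameU}) and applies it \emph{before} the duality step; modulo that reordering, the two arguments are the same.

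A smaller technical slip: building $\varphi=\eta_2\lapms{2s-t}\Psi$ from the \emph{outermost} cutoff makes $\Phi_\Psi$ carry the inner factor $(1-\eta_2)$, and then splitting with the \emph{smaller} $\eta_1$ produces $(1-\eta_1)\Phi_\Psi$ as the far-field piece. That is the wrong nesting for \eqref{prop2.4-parta}, which needs the outer factor to vanish on the support of the inner cutoff. The fix is immediate---build $\varphi$ with the innermost cutoff and split the error with an outer one, as the paper does---but as written your appeal to \eqref{prop2.4-parta} does not literally apply.
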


For $c=0$,  part b) of \Cref{intermediate-regular-Klaps} is precisely \cite[Proposition 4.1]{MSY21}. For the case when $c\neq 1$, the proof of the proposition uses arguments that parallel the proof of  \cite[Proposition 4.1]{MSY21}. Notice also that the estimate for part a) follows from part b) after we made sure $\bar{\mathfrak{q}}$ exists and taking $p=\bar{\mathfrak{q}}$ and $r=q.$ For the existence, given $q>1$, we can choose $$\bar{\mathfrak{q}}\in \left({nq\over n-(2s-t)q}, {n\over n-(2s-t)}\right).$$ 
The interval is nonempty because $q>1$. Below we will sketch the proof of part b). 
First, we state and prove the following observation, see also \cite{Scott22}.

\begin{lemma}\label{U=lameU}
Assume $c \neq 1$. Then for any $U: \R^d \to \R^d$, and $1<p<\infty,$ we have 
\begin{equation}\label{rn-inequality}
 \|U\|_{L^p(\R^d)} \aleq  \left\| U + c\,(\Rz\otimes \Rz) U\right\|_{L^p(\R^d)}.
\end{equation}
Moreover we have for any open set $\Omega_1 \subsubset \Omega_2$ and any $\tau \in [0,1]$ such that $\frac{np}{n+\tau p} > 1$
and $q \in [1,\infty)$
\[
 \|U\|_{L^p(\Omega_1)} \aleq  \left\| U + c\,(\Rz\otimes \Rz) U\right\|_{L^p(\Omega_2)}+\|U\|_{L^{\frac{np}{n+\tau p}}(\Omega_2)} + \|U\|_{L^{q}(\R^n)}
\]
With the constant depending on $\Omega_1,\Omega_2,\tau,q$.
\end{lemma}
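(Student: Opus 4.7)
The plan is to first prove the global inequality \eqref{rn-inequality} by Fourier analysis, and then deduce the local bound via a commutator-cutoff argument. For the global bound, the matrix-valued Fourier multiplier of $\Rz\otimes\Rz$ equals, up to the sign/normalization absorbed into $c$, the rank-one projection $\xi\otimes\xi/|\xi|^2$. Consequently the symbol of $T:=I+c\,\Rz\otimes\Rz$ acts as multiplication by $1-c$ on the direction of $\xi$ and by $1$ on its orthogonal complement; it is invertible whenever $c\neq 1$, and its inverse is another linear combination of the form $I+c'\,\Rz\otimes\Rz$ for an explicit constant $c'$. Because each entry $\Rz_j\Rz_k$ is a Calder\'on-Zygmund operator, $T^{-1}$ is bounded on $L^p(\R^n)$ for $1<p<\infty$, and applying it to $TU$ yields \eqref{rn-inequality} at once.

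For the localized estimate, I would pick cutoffs $\eta,\eta_1\in C_c^\infty(\Omega_2)$ with $\eta\equiv 1$ in a neighborhood of $\overline{\Omega_1}$ and $\eta_1\equiv 1$ in a neighborhood of $\supp\eta$, and set $\delta:=\dist(\supp\eta,\supp(1-\eta_1))>0$. The Leibniz-type identity
\[
T(\eta U)=\eta\,TU+c\,[\Rz\otimes\Rz,\eta]U
\]
combined with the global bound applied to $\eta U$ produces
\[
\|U\|_{L^p(\Omega_1)}\leq\|\eta U\|_{L^p(\R^n)}\aleq\|TU\|_{L^p(\Omega_2)}+\|[\Rz\otimes\Rz,\eta]U\|_{L^p(\R^n)},
\]
so the whole problem reduces to estimating the commutator on the right.

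To handle $[\Rz\otimes\Rz,\eta]U$, I would decompose $U=\eta_1 U+(1-\eta_1)U$. The near piece $[\Rz\otimes\Rz,\eta](\eta_1 U)$ has kernel $(\eta(x)-\eta(y))K(x,y)$ with $|K(x,y)|\aleq|x-y|^{-n}$; because $\eta$ is bounded and Lipschitz, $|\eta(x)-\eta(y)|\aleq|x-y|^\tau$ for any $\tau\in[0,1]$, so this piece is dominated pointwise by the Riesz potential $I^\tau(|\eta_1 U|)$, and Lemma \ref{S-I} under the hypothesis $\tfrac{np}{n+\tau p}>1$ delivers the contribution $\|U\|_{L^{np/(n+\tau p)}(\Omega_2)}$. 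For the far piece, the product $\eta(1-\eta_1)$ vanishes identically by the choice of cutoffs, so the commutator collapses to $-\eta\,(\Rz\otimes\Rz)((1-\eta_1)U)$, which is supported in $\supp\eta$ and whose integrand is constrained by $|x-y|\geq\delta>0$; a H\"older estimate on the region $\{|x-y|\geq\delta\}$ (or, when $q=1$, a pointwise supremum bound) controls this uniformly by $\|U\|_{L^q(\R^n)}$.

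The main obstacle is the algebraic inversion of $T$: the exact condition $c\neq 1$ has to be checked against the precise normalization used for $\Rz\otimes\Rz$ (traceable back to the constants $\tilde{\gamma}_1,\tilde{\gamma}_2$ appearing in \eqref{eq:xixjcomp}), and without the right alignment one would instead land at a condition of the form $c\neq 1/c_0^2$ for some constant depending on Fourier conventions. Once that is pinned down, the remainder is routine; the only delicate bookkeeping is choosing the nested cutoff pair $(\eta,\eta_1)$ so that $\supp\eta\subset\{\eta_1=1\}$, which is precisely what makes the far piece genuinely a tail integral absorbable into the global $L^q$ norm of $U$.
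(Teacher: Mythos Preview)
Your proposal is correct and follows essentially the same route as the paper: Fourier inversion of the symbol $\mathbb{I}-c\,\xi\otimes\xi/|\xi|^2$ for the global bound, then a nested-cutoff argument splitting the commutator $[\Rz\otimes\Rz,\eta]U$ into a near piece (controlled by a Riesz potential and Sobolev) and a far piece (disjoint supports, tail kernel in $L^{q'}$). The only substantive difference is that for the near commutator piece you argue by direct pointwise domination of the kernel $(\eta(x)-\eta(y))K(x,y)$ by $|x-y|^{\tau-n}$, whereas the paper invokes an off-the-shelf commutator estimate from \cite{LS18,Ingmanns20} to get $\|[\Rz\otimes\Rz,\eta_1](\eta_2 U)\|_{L^p}\aleq\|\eta_1\|_{\lip}\|\lapms{\tau}(\eta_2 U)\|_{L^p}$; your version is more self-contained. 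Your caveat about the normalization of $c$ is well taken but resolves exactly as you suspect: since the Fourier symbol of $\Rz$ is $\imath\xi/|\xi|$, the symbol of $\Rz\otimes\Rz$ is $-\xi\otimes\xi/|\xi|^2$, so $I+c\,\Rz\otimes\Rz$ has symbol $\mathbb{I}-c\,\xi\otimes\xi/|\xi|^2$ with eigenvalues $1$ and $1-c$, and the condition is precisely $c\neq 1$.
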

\begin{proof}
The first estimate of lemma is known, see, e.g., \cite{Scott22}, but for the convenience of the reader we sketch the argument. After recalling that the Riesz-transform $\Rz$ is the operator with Fourier symbol $\imath \frac{\xi}{|\xi|}$, we may take the Fourier transform to obtain
\[
 \mathcal{F} \brac{U +c (\Rz\otimes\Rz) U} = C \mathbb{D}(\xi) \mathcal{F} U(\xi),
\]
where
\[
\mathbb{D}(\xi) = \mathbb{I}_n -c\frac{\xi\otimes \xi}{|\xi|^2}.
\]
We observe that $\mathbb{D}(\xi)$ is a symmetric matrix with eigenvalues $1$ (eigenspace: $\xi^\perp$ which is $d-1$-dimensional) and $1-c \neq 0$ (eigenspace ${\rm span}(\xi)$, whenever $\xi \in \R^n \setminus \{0\}$. In particular for any $\xi \in \R^n \setminus \{0\}$,  $\mathbb{D}^{-1}(\xi)$ exists, and is given by 
\[
\mathbb{D}^{-1}(\xi) = \mathbb{I}_n + {1\over 1-c} \frac{\xi\otimes \xi}{|\xi|^2}.
\]
It then follows that 
$|\mathbb{D}^{-1}(\xi)| \aleq {2-c\over 1-c}$. Then we may write 
\[
 U = \mathcal{F}^{-1} \brac{\mathbb{D}^{-1} \mathcal{F}\brac{U +c (\Rz\otimes \Rz))U}}.
\]
The claim about the $L^p$ estimate  follows from Mikhlin- or H\"ormander multiplier theorem, \cite{Mengesh-Scott2019} or \cite{Scott22} for detailed calculation.

For the second inequality, take $\eta_1,\eta_2 \in C_c^\infty(\Omega_2)$, $0\leq \eta_1, \eta_2 \leq 1,$  $\eta_1 \equiv 1$ in $\Omega_1$, $\eta_2 \equiv 1$ in $\supp \eta_1$.

Then we apply inequality \eqref{rn-inequality} to $\eta_1 U$ to obtain that 
\[
 \|U\|_{L^p(\Omega_1)} \aleq  \|\eta_1 U\|_{L^p(\mathbb{R}^{n})} \aleq \left\|\eta_1 U + c\,(\Rz\otimes \Rz) (\eta_1 U)\right\|_{L^p(\mathbb{R}^{n})}. 
\]
It then follows that 
\[
 \|U\|_{L^p(\Omega_1)} \aleq  \left\|U + c\,(\Rz\otimes \Rz) U\right\|_{L^p(\Omega_2)}+\|[\Rz \otimes \Rz,\eta_1] (U)\|_{L^p(\R^n)}, 
\]
where here we used the commutator notation
$
 [T,\eta_1](f) := T(\eta_1 f)-\eta_1 T(f).$  
To estimate the last term in the previous inequality, we use the identity $\eta_1(1-\eta_{2})\equiv 0$ to write  
\[
[\Rz \otimes \Rz,\eta_1] (U) = [\Rz \otimes \Rz,\eta_1] (\eta_2 U) + \eta_1 \Rz \otimes \Rz ((1-\eta_2)U)
\]
 and therefore 
\[
 \|[\Rz \otimes \Rz,\eta_1] (U)\|_{L^p(\R^n)} \leq
 \|[\Rz \otimes \Rz,\eta_1] (\eta_2 U)\|_{L^p(\R^n)} +
 \|\eta_1 \Rz \otimes \Rz ((1-\eta_2)U)\|_{L^p(\R^n)}.
\]
For the first term, in view of commutator estimates, say  in \cite[Theorem 6.1.]{LS18} or \cite{Ingmanns20}, for any $\tau \in [0,1]$ denoting by $\lapms{\tau}=(-\Delta)^{-\frac{\tau}{2}}$ the Riesz potential, and then using Sobolev inequality (if $\frac{np}{n+\tau p} \in (1,\infty)$)
\[
 \|[\Rz \otimes \Rz,\eta_1] (\eta_2 U)\|_{L^p(\R^n)} \aleq \|\eta_1\|_{\lip} \|\lapms{\tau} (\eta_2 U)\|_{L^{p}(\R^n)} \aleq \|\eta_2 U\|_{L^{\frac{np}{n+\tau p}}(\R^n)} \aleq \|U\|_{L^{\frac{np}{n+\tau p}}(\Omega_2)}.
\]
For the other term, we observe that for any $x\in \mathbb{R}^{n}$
\[
 |\eta_1(x) \Rz \otimes \Rz ((1-\eta_2)U)(x)| \aleq \int_{\R^n} \eta_1(x) \frac{1}{|x-y|^n} (1-\eta_2(y)) |U|(y) \aleq \kappa \ast |U|(x)
\]
where
\[
 \kappa(z) := \frac{1}{|z|^n} \chi_{|z| \ageq 1},
\]
where the constant in $\ageq$ depends on the distance of the support of $(1-\eta_2)$ to the support of $\eta_1$. We observe that $\kappa \in L^{q}(\R^n)$ for any $q \in (1,\infty]$. It then follows from Young's convolution inequality that  for any $q \in [1,\infty)$ (observing that $\kappa$ is integrable to any power)
\[
 \|\eta_1 \Rz \otimes \Rz ((1-\eta_2)U)\|_{L^p(\R^n)} \aleq \|\kappa \ast |U|\|_{L^\infty(\R^n)} \aleq \|\kappa\|_{L^{q'}(\R^n)} \|U\|_{L^q(\R^n)} \aleq \|U\|_{L^q(\R^n)} .
\]
Putting the inequalities together we complete the proof of the lemma. 
\end{proof}
We are now ready to sketch the proof of Proposition \ref{intermediate-regular-Klaps}. 
\begin{proof}[Sketch of the proof of Proposition \ref{intermediate-regular-Klaps}]
Instead of $\Omega_1$ and $\Omega_2$ we are going to prove the statement for $\Omega_1$ and $\Omega_4$ and some choice of $\Omega_2,\Omega_3$
such that $\Omega_1 \subsubset \Omega_2 \subsubset \Omega_3 \subsubset \Omega_4$. 
From the previous lemma, Lemma \ref{U=lameU}, we have
\[
\|\laps{t} v \|_{L^p(\Omega_1)} \aleq \|\laps{t} v +c\Rz\otimes \Rz \laps{t} v\|_{L^p(\Omega_2)} + \|\laps{t} u\|_{L^r(\Omega_2)} +\|\laps{t} u\|_{L^q(\R^n)}.
\]
By ellipticity of $\bar{A}$ and duality
{\small \[
\begin{aligned}
 \|\laps{t} v +c\Rz\otimes \Rz \laps{t} v\|_{L^p(\Omega_2)} \aleq &\|\bar{A} \brac{\laps{t} v +c\Rz\otimes \Rz \laps{t} v}\|_{L^p(\Omega_2)}\\
 \aleq&\sup_{\psi} \int_{\R^n} \langle\bar{A}(x)\brac{\laps{t} v +c\Rz\otimes \Rz \laps{t} v}(x),  \psi (x)\rangle dx,
\end{aligned}
 \]}
where the supremum is taken over all $\psi\in C_c^{\infty}(\Omega_2;\R^n)$. To finish the proof of the proposition, it suffices to prove that for any $\psi\in C_c^{\infty}(\Omega_2;\R^n)$
\[
\begin{split}
&\int_{\R^n} \langle\bar{A}(x)\brac{\laps{t} v +c\Rz\otimes \Rz \laps{t} v}(x),  \psi (x)\rangle dx \\
&\aleq \left(\|f_1\|_{L^{p}(\Omega_2)} + \|f_2\|_{L^{p}(\Omega_3)} + \|f_1\|_{L^{q} + (\R^n)} +\|\laps{t} v\|_{L^{r}(\Omega_4)} + \|\laps{t} v\|_{L^{q}(\R^{n})}\right)\|\psi\|_{L^{p'}(\Omega_3)}. 
\end{split}
\] 
To that end, pick $\eta_1, \eta_2 \in C_c^\infty(\Omega_3)$ with $\eta_1 \equiv 1$ in a neighborhood of $\Omega_2$, and $\eta_2 \equiv 1$ on the support of $\eta_1$. For any $\psi\in C_c^{\infty}(\Omega_2;\R^n)$, write $\psi = \laps{2s-t}\brac{\lapms{2s-t}\psi}$ and 
{\small \begin{equation}\label{psi-decomposed}
\begin{aligned}
\psi 
&=\laps{2s-t}\brac{\eta_1 \lapms{2s-t}\psi} +  \laps{2s-t}\brac{(1-\eta_1) \lapms{2s-t}\psi}\\
&=\laps{2s-t}\brac{\eta_1 \lapms{2s-t}\psi} +  \eta_2\laps{2s-t}\brac{(1-\eta_1) \lapms{2s-t}\psi} + (1-\eta_2)\laps{2s-t}\brac{(1-\eta_1) \lapms{2s-t}\psi}.
\end{aligned}
\end{equation}}
Then we have 
\[
\begin{split}
 &\int_{\R^n} \langle\bar{A}\brac{\laps{t} v +c\Rz\otimes \Rz \laps{t} v}(x), \psi(x)\rangle dx\\
 =&\int_{\R^n} \langle\bar{A}\brac{\laps{t} v +c\Rz\otimes \Rz \laps{t} v}(x),  \laps{2s-t}\brac{\eta_1 \lapms{2s-t}\psi}(x)\rangle dx\\
&+\int_{\R^n} \langle\bar{A}\brac{\laps{t} v +c\Rz\otimes \Rz \laps{t} v}(x), \eta_{2}(x) \laps{2s-t}\brac{(1-\eta_1) \lapms{2s-t}\psi}(x) \rangle dx\\&+\int_{\R^n} \langle\bar{A}\brac{\laps{t} v +c\Rz\otimes \Rz \laps{t} v}(x), (1-\eta_{2}) \laps{2s-t}\brac{(1-\eta_1) \lapms{2s-t}\psi}(x) \rangle dx\\
&=I + II + III.
 \end{split}
 \]
 We estimate each of the above integrals. To estimate $I$, we set $\varphi := \eta_1 \lapms{2s-t}\psi \in C_c^\infty(\Omega_3)$. Then we notice that $\varphi$ is an admissible test function in the equation \eqref{tested-weighted} and, thus,  
 we can use it in the equation  
\[
\begin{split}
 I &= \int_{\R^n} \langle f_1(z),\, \laps{2s-t} \varphi(z)\rangle\, dz + \int_{\R^n} \langle f_2(z),\, \varphi(z)\rangle\, dz\\
&= \int_{\R^n} \langle f_1(z),\, \psi(z)\rangle\, dz + \int_{\R^n} \langle f_2(z),\, \varphi(z)\rangle\, dz -\int_{\R^n} \langle f_1(z),\, \laps{2s-t} (1-\eta_1) I^{2s-t} \psi\rangle\, dz. 
  \end{split}
\]
where the latter is obtained  using the decomposition  \eqref{psi-decomposed}.
Now the first two terms can be estimates as follows:
\[
\begin{split}
\int_{\R^n} \langle f_1(z),\, \psi(z)\rangle\, dz + \int_{\R^n} \langle f_2(z),\, \varphi(z)\rangle\, dz &\leq \|f_1\|_{L^{p}(\Omega_2)} \|\psi\|_{L^{p'}(\Omega_2)}  + \|f_2\|_{L^{p}(\Omega_3)} \|\varphi\|_{L^{p'}(\Omega_3)}\\
&\aleq\left(\|f_1\|_{L^{p}(\Omega_2)} + \|f_2\|_{L^{p}(\Omega_3)}\right)\|\psi\|_{L^{p'}(\Omega_2)}  
\end{split}
\]
where we used Sobolev inequalities \eqref{eq:sob:loc1} and \eqref{eq:sob:loc2} and the fact that $\psi$ is compactly supported to estimates
\[
\|\varphi\|_{L^{p'}(\Omega_3)} \leq \|\lapms{2s-t}\psi\|_{L^{p'}(\Omega_3)} \aleq \|\psi\|_{L^{p'}(\Omega_2)}. 
\]
To estimate the last term of $I$, first we write it as  
\[
\begin{split}
\int_{\R^n} \langle f_1(z),\, \laps{2s-t} (1-\eta_1) I^{2s-t} \psi\rangle\, dz 
&= \int_{\R^n} \langle f_1(z),\, \eta_2\laps{2s-t} (1-\eta_1) I^{2s-t} \psi\rangle\, dz\\
&+ \int_{\R^n} \langle f_1(z),\, (1-\eta_2)\laps{2s-t} (1-\eta_1) I^{2s-t} \psi\rangle\, dz.
\end{split}
\]
Then while application of \eqref{prop2.4-partb} of  Lemma \ref{Prop2.4MSY} 
(or \cite[Proposition 2.4 part b)]{MSY21}) yields, 
\[
\begin{split}
\int_{\R^n} \langle f_1(z),\, \eta_2\laps{2s-t} ((1-\eta_1) I^{2s-t} \psi)\rangle\, dz &\leq \|f_{1}\|_{L^{p}(\Omega_3)} \|\laps{2s-t} (1-\eta_1) I^{2s-t} \psi\|_{L^{p'}(\Omega_3)} \\
&\aleq  \|f_{1}\|_{L^{p}(\Omega_3)} \|\psi\|_{L^{p'}(\Omega_2)} 
\end{split}
\]
and application of \eqref{prop2.4-parta} of  Lemma \ref{Prop2.4MSY} 
which holds for any $r>1$ implies that 
\[
\begin{split}
 \int_{\R^n}& \langle f_1(z),\, (1-\eta_2)\laps{2s-t} (1-\eta_1) I^{2s-t} \psi\rangle\, dz \\&\leq \|f_{1}\|_{L^{q}(\mathbb{R}^{n})} \|(1-\eta_2)\laps{2s-t} (1-\eta_1) I^{2s-t} \psi \|_{L^{q'}(\mathbb{R}^{n})}\\
 &\aleq \|f_{1}\|_{L^{q}(\mathbb{R}^{n})}\|\psi\|_{L^{p'}(\Omega_2)}.
\end{split}
\]
That finishes the estimate for $I$. To estimate $II$, we again apply \eqref{prop2.4-partb} of  Lemma \ref{Prop2.4MSY}  
to obtain 
\[
\begin{split}
II &\leq \int_{\R^n} |\bar{A}\brac{\laps{t} v +c\Rz\otimes \Rz \laps{t} v}(x)| |\eta_{2}(x) \laps{2s-t}\brac{(1-\eta_1) \lapms{2s-t}\psi}(x)| dx  \\
&\leq \|\bar{A}\brac{\laps{t} v +c\Rz\otimes \Rz \laps{t} v}\|_{L^{r}(\Omega_3)} \|\laps{2s-t}\brac{(1-\eta_1) \lapms{2s-t}\psi}\|_{L^{r'}(\Omega_3)}\\
&\aleq \|{\laps{t} v}\|_{L^{r}(\Omega_4)}\|\psi\|_{L^{p'}(\Omega_3)}.
\end{split}
\]
Finally, the estimate $III$ follows from \eqref{prop2.4-parta} of  Lemma \ref{Prop2.4MSY} 
as 
{\small \[
\begin{split}
III &\leq \int_{\R^n} |\bar{A}\brac{\laps{t} v +c\Rz\otimes \Rz \laps{t} v}(x)| |(1-\eta_{2})(x) \laps{2s-t}\brac{(1-\eta_1) \lapms{2s-t}\psi}(x)| dx  \\
&\aleq \|{\laps{t} v}\|_{L^{q}(\Omega_4)}\|\psi\|_{L^{p'}(\Omega_3)}.
\end{split}
\]}
That concludes the proof of the proposition. 

\end{proof}
We are now ready to state and prove the optimal regularity result for the weighted fractional Lam\'e equation given in \eqref{weighted-FLAME}. 
The result follows from \Cref{intermediate-regular-Klaps} by iterating the result on successive subdomains. We sketch its proof below. 
\begin{theorem}
\label{pr:ourtheorem1.7}
Let $s  \in (0,1)$, $t \in (0,2s)$ such that $2s-t<1$. Assume that for some $q \in (1,\infty)$, $\laps{t} u \in L^q(\R^n)$ is a distributional solution to
 \[
 \begin{split}
 & \int_{\R^d} \langle\bar{A}(z) (\laps{t} u +c\Rz\otimes \Rz \laps{t} u)(z), \laps{2s-t} \varphi(z)\rangle\, dz \\
  &= \int_{\R^n} \langle f_1(z)\,, \laps{2s-t} \varphi(z)\rangle\, dz + \int_{\R^n} \langle f_2(z)\,, \varphi(z)\rangle\, dz, \quad \forall \varphi \in C_c^\infty(\Omega).
  \end{split}
\]
Here $\bar{A}: \R^n \to \R$ is a positive, measurable, and bounded from above and below, i.e.
\[
\Lambda^{-1} \leq \bar{A}(z) \leq  \Lambda \quad \text{a.e. }x \in \R^n.
\]
Then for any $\Omega' \subsubset \Omega \subsubset \R^n$, $p\in (1, \infty)$, if $f_1,f_2 \in  L^q(\R^n) \cap L^p(\Omega)$,  then $\laps{t} u \in L^p(\Omega')$ with
\[
 \|\laps{t} u\|_{L^p(\Omega')} \aleq \|f_1\|_{L^p(\Omega)} + \|f_2\|_{L^p(\Omega)}+ \|f_1\|_{L^q(\R^n)} + \|\laps{t} u\|_{L^q(\R^n)}.
\]
If, in addition, $\bar{A}$ is $\gamma$-H\"older continuous uniformly, that is 
\[
\sup_{x, y \in \R^n}{|\bar{A}(x) - \bar{A}(y)|\over |x-y|^\gamma} \leq \Lambda,
\]
then for any $\beta\in (0, \min\{\gamma, 2s-t\}$ and any $\Omega'\subsubset\Omega\subsubset\R^n$
\[
\|\laps{{t+\beta}}u\|_{L^{p}(\Omega')} \aleq  \|\laps{t}u \|_{L^{q}(\R^n)} + \|\laps{\beta} f_1\|_{L^{q}(\R^n)} + \|f_{2}\|_{L^{q}(\Omega)}.  
\]

\end{theorem}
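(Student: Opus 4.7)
The plan is to deduce both statements from Proposition \ref{intermediate-regular-Klaps} by a finite iteration on nested subdomains; the second statement additionally uses a fractional differentiation of the equation together with the H\"older hypothesis on $\bar{A}$.

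For the first $L^p$ estimate, I would iterate Proposition \ref{intermediate-regular-Klaps}(a) along a nested chain $\Omega' = \Omega_0 \subsubset \Omega_1 \subsubset \cdots \subsubset \Omega_N = \Omega$, producing a strictly increasing sequence $q = q_0 < q_1 < \cdots < q_N$ of integrability exponents. Each step gives a Sobolev-type gain $q_{k+1}/q_k$ bounded below by a factor depending only on $n, s, t$ (as long as $(2s-t) q_k < n$), so after finitely many steps either $q_N \geq p$ or we reach an exponent with $(2s-t) q_k \geq n$, at which point Proposition \ref{intermediate-regular-Klaps}(b) combined with Sobolev embedding on a bounded set delivers any desired $p < \infty$. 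At each stage the $L^{p_{k+1}}$ data bounds on $\Omega_{k+1}$ pass to $L^{p_k}$ data bounds on $\Omega_k$ via H\"older's inequality, while the global tail $\|\laps{t} u\|_{L^q(\R^n)}$ remains unchanged; composing $N$ such estimates yields the claim.

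For the second statement, pick cutoffs $\eta_1, \eta_2 \in C_c^\infty(\Omega)$ with $\eta_1 \equiv 1$ near $\Omega'$ and $\eta_2 \equiv 1$ on $\supp \eta_1$, and set $v := \laps{\beta}(\eta_1 u)$. Taking $\laps{\beta}$ of the equation and using the identity
\[
\bar{A}(z) \laps{\beta} F(z) = \laps{\beta}(\bar{A} F)(z) - [\laps{\beta}, \bar{A}] F(z),
\]
one finds that $v$ satisfies an equation of the same form with $f_1$ replaced by $\laps{\beta} f_1$, $f_2$ inherited, plus a commutator term $[\laps{\beta}, \bar{A}](\laps{t} u + c\Rz\otimes\Rz \laps{t} u)$ and localization remainders. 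The $\gamma$-H\"older continuity of $\bar{A}$ and the assumption $\beta < \gamma$ give a commutator estimate of the flavour used in Section \ref{s:commies}, namely
\[
\|[\laps{\beta}, \bar{A}] F\|_{L^q(\R^n)} \aleq \|\bar{A}\|_{C^\gamma} \|\lapms{\gamma - \beta} F\|_{L^q(\R^n)},
\]
showing the commutator is of strictly lower order than $\beta$. Since $\beta < 2s - t$ one still has $2s - t \in (0, 1)$, so the first statement applied to $v$ yields $\laps{t} v \in L^p$ locally, and $\laps{t} v$ agrees with $\laps{t+\beta} u$ on $\Omega'$ up to tail terms controlled by Lemma \ref{Prop2.4MSY}.

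The main difficulty I anticipate is the rigorous bookkeeping of the tail and localization contributions introduced by $\laps{\beta}$ and by the cutoffs $\eta_j$: the commutator $[\laps{\beta}, \eta_1] u$ and the nonlocal tail of $\lapms{\beta} \psi$ outside $\supp \eta_2$ must be recast as admissible right-hand sides of the form $\laps{2s-t} \tilde{f}_1 + \tilde{f}_2$ with $\tilde{f}_j \in L^q(\R^n)$. The Riesz-potential tail estimates of Lemma \ref{Prop2.4MSY} together with the commutator framework of Section \ref{s:commies} are tailored precisely for this, so once these are invoked the second statement reduces to a single application of the first after one further nesting of the subdomains.
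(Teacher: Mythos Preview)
Your iteration for the first statement is the same idea as the paper's, but as written it has a small gap: part (a) of Proposition~\ref{intermediate-regular-Klaps} requires the \emph{global} hypothesis $\laps{t}u\in L^{q}(\R^{n})$, and after one step you only gain \emph{local} $L^{q_{1}}$ integrability, so you cannot re-apply part (a) with $q$ upgraded to $q_{1}$. The paper handles this by iterating part (b) instead, keeping the global exponent $q$ fixed throughout and running a descending chain $p=p_{1}>p_{2}>\dots>p_{L+1}=q$ of local exponents with $p_{i+1}>\frac{np_{i}}{n+(2s-t)p_{i}}$; at each step the local $L^{p_{i}}$ norm on $\Omega_{i}$ is bounded by the local $L^{p_{i+1}}$ norm on $\Omega_{i+1}$ plus the fixed global $L^{q}$ tail. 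Once you make this switch to part (b), your bottom-up description becomes equivalent to the paper's top-down one.

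For the second statement the paper gives no argument beyond referring to \cite[Proposition~4.2]{MSY21}, and your outline---shift $\laps{\beta}$ across $\bar A$, treat $[\laps{\beta},\bar A]$ as a lower-order $f_{1}$-type datum via the H\"older hypothesis $\beta<\gamma$, then invoke the first part with $t$ replaced by $t+\beta$---is precisely that argument. The localization $v=\laps{\beta}(\eta_{1}u)$ you introduce is unnecessary: since the equation is already tested against $\varphi\in C_{c}^{\infty}(\Omega)$, one writes $\laps{2s-t}\varphi=\laps{2s-(t+\beta)}\laps{\beta}\varphi$ and moves $\laps{\beta}$ onto the factor $\bar A(\laps{t}u+c\Rz\otimes\Rz\laps{t}u)$ directly, yielding an equation of the same form for $\laps{t+\beta}u$. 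The cutoff-commutator bookkeeping you flag as the main difficulty therefore does not arise.
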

\begin{proof} 
Suppose that $p>q, $ and $2s-t <1$. We consider a sequence of pairs $(\Omega_i, p_i )$ for $i=1, 2, \cdots, L$ such that 
\[
\Omega_1 = \Omega', p_1 = p, \quad \Omega_i\subsubset \Omega_{i+1}, 
\] 
\[
p_{i+1} \in [q, p_i]\quad \text{such that } p_{i+1} > {n p_{i} \over n + (2s-t)p} > 1, 
\]
for some $L$ and $p_{L+1} = q$, $p_L = \bar{\mathfrak{q}}$, where $\bar{\mathfrak{q}}$ is obtained in part a) of \Cref{intermediate-regular-Klaps}. A finite $L$ depending on $t, s, p, q$ exists. We then apply part b) \Cref{intermediate-regular-Klaps} to obtain the inequality that 
\[
 \|\laps{t} u\|_{L^{p_{i}}(\Omega_i)} \aleq \sum_{j=1}^{2}\left(\|f_j\|_{L^{p_i}(\Omega_{i+1})} + \|f_j\|_{L^q(\R^n)}\right) + \|\laps{t} u\|_{L^{p_{i+1}}(\Omega_{i+1})} +\|\laps{t} u\|_{L^q(\R^n)}.
\]
We now iterate to get the desired inequality. 

The second part of the theorem can be proved in exactly the same was \cite[Proposion 4.2]{MSY21}.
\end{proof}

\section{The regularity theorem: Proof of Theorem~\ref{th:main}}\label{sec-regularity-proof}

\Cref{th:main} will be proved by an iteration argument that is explained in detail in \cite{MSY21}. In short, it follows the following steps. First, we obtain a localized  small incremental improvement  for a solution to a globally posed problem. Second, via a cutoff argument, extend the solution with locally improved regularity to be globally defined and also at the same time solve a globally posed problem. This extension is accompanied by essential controlled estimates. We now iterate and get a localized small improved regularity further increasing the regularity of the solution, and so on.   The localizing estimate can be done in exactly the same way as \cite[Theorem 5.1]{MSY21}. The only component missing is the ``small localized improvement'' that replaces \cite[Theorem 6.1]{MSY21}. In the remaining, we will only prove this missing regularity result and refer the execution of the iterative argument to \cite{MSY21}.  
\begin{theorem}\label{th:slighincrease}
Fix $s \in (0,1)$, $t \in [s,2s)$, $t <1$. For  given $\alpha\in (0, 1)$, $\lambda,\Lambda >0$, let $A\in \mathcal{A}(\alpha,\lambda, \Lambda)$. Suppose also that for any $2 \leq p < \infty$,  $u \in H^{s,2}(\R^n,\R^n) \cap H^{t,p}(\R^n,\R^n) \cap H^{t,2}(\R^n,\R^n)$ with $\supp u \subset \Omega \subsubset \R^n$ is a solution to
\begin{equation}\label{eq:slight:1}
\langle \mathcal{L}^{s}_{A} u, \varphi\rangle= \int_{\R^{n}} \langle f_1,  \laps{2s-t} \varphi\,\rangle dz 
 + \int_{\R^{n}} \langle f_2,  \varphi\, \rangle dz,\quad \forall \varphi \in C_c^\infty(\R^n).
\end{equation}
Then there exists $\bar{\eps} > 0$ such that if $r \in [p,p+\bar{\eps})$ and $f_1,f_2 \in L^r(\R^n)\cap L^p(\R^n),$ then
\[
 \|\laps{t} u\|_{L^r(\Omega)} \aleq \sum_{i=1}^2\|f_i\|_{L^r(\R^n)} + \|f_i\|_{L^p(\R^n)}+ \|\laps{t} u\|_{L^p(\R^n)}.
\]
In addition, if $\beta \in [0,\bar{\eps}]$, $\laps{\beta} f_1 \in L^p(\R^n)$,  and $f_1,f_2 \in L^p(\R^n)$,  then $\laps{t+\beta} u \in L^p_{loc}(\R^n)$ and 
\begin{equation}\label{beta-diff}
 \|\laps{t+\beta} u\|_{L^p(\Omega)} \aleq \|\laps{\beta}f_1\|_{L^p(\R^n)} + \|f_1\|_{L^p(\R^n)}+\|f_2\|_{L^p(\R^n)}+ \|\laps{t} u\|_{L^p(\R^n)}.
\end{equation}
Here, $\bar{\eps} > 0$ is uniform in the following sense: $\bar{\eps}$ depends only on $\alpha$ and the number $\theta \in (0,1)$  which is such that
\[
 \theta < s,t,2s-t < 1-\theta,\,\,\text{and   \,\,$ 2 \leq p < \frac{1}{\theta}.$}
\]
\end{theorem}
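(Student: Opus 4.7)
The plan is to treat equation \eqref{eq:slight:1} as a small perturbation of the weighted fractional Lam\'e equation studied in Section~\ref{sec-Base-system}. Using the decomposition \eqref{2nd-div} with $s_1 = t$ and $s_2 = 2s-t$, I would rewrite the tested equation in the form
\[
\langle \bar{\mathfrak{L}}^{t,2s-t}_{A_D} u, \varphi\rangle = \int_{\R^n} \langle F_1, \laps{2s-t}\varphi\rangle\,dz + \int_{\R^n} \langle f_2, \varphi\rangle\,dz,
\]
where $F_1 = f_1 + \Phi$ absorbs the two commutator terms $\mathcal{D}^s_1(u,\varphi)$ and $\mathcal{D}^{t,2s-t}_2(u,\varphi)$. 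By the duality form of Propositions~\ref{pr:prop11} and~\ref{pr:prop12}, $\Phi$ can be identified pointwise with
\[
|\Phi(z)| \aleq \lapms{\sigma-\eps} |\laps{t-\eps} u|(z),
\]
for some $\sigma \in (0, \min\{\sigma_0,\alpha\})$ and any sufficiently small $\eps>0$. Since $A\in\mathcal{A}(\alpha, \lambda, \Lambda)$, the diagonal weight $A_D(z) = A(z,z)$ is bounded above and below by $1/\lambda$ and $\lambda$, so Theorem~\ref{pr:ourtheorem1.7} applies to the resulting weighted Lam\'e system.

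The next step is to bound $\Phi$ in $L^r$ for $r$ slightly bigger than $p$. Writing $\laps{t-\eps} u = \lapms{\eps}\laps{t}u$ and iterating the Sobolev estimates of Lemma~\ref{S-I} in combination with the Riesz potential bound for $\lapms{\sigma-\eps}$, one obtains
\[
\|\Phi\|_{L^{r}(\R^n)} \aleq \|\laps{t} u\|_{L^p(\R^n)} \quad\text{whenever}\quad \tfrac{1}{r} > \tfrac{1}{p} - \tfrac{\sigma}{n}.
\]
Under the uniformity assumption $p < 1/\theta$ with $\theta < \sigma$, this leaves a nontrivial interval $r\in [p, p+\bar\eps)$ for $\bar\eps$ depending only on $\alpha$ and $\theta$. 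Once $F_1, f_2\in L^r(\R^n)$ is secured, Theorem~\ref{pr:ourtheorem1.7} upgrades $\laps{t} u$ from $L^p$ to $L^r$ locally and delivers the quantitative estimate of the first claim; the $L^p$ tails in the bound account for the far-field contributions $\|f_i\|_{L^p(\R^n)} + \|\laps{t}u\|_{L^p(\R^n)}$ arising in the global form of Theorem~\ref{pr:ourtheorem1.7}.

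For the additional differentiability gain \eqref{beta-diff}, I would revisit the decomposition \eqref{2nd-div} but now with $s_1 = t+\beta$ and $s_2 = 2s-t-\beta$, which remains admissible provided $t+\beta < 2s$. Transferring $\laps{\beta}$ from the test function so as to exploit the assumption $\laps{\beta}f_1\in L^p$, one obtains an equation for $u$ in which the first datum is essentially $\laps{\beta}f_1$ augmented by a correction of the form $\lapms{\sigma-\eps}|\laps{t+\beta-\eps}u|$. Choosing $\beta \in [0,\bar\eps]$ with $\bar\eps<\min\{\sigma_0,\alpha,2s-t,1-t\}$ makes this correction controllable in $L^p$, so that a second application of Theorem~\ref{pr:ourtheorem1.7} (with exponents $s_1 = t+\beta$ and $s_2 = 2s-t-\beta$ and base integrability $r = p$) yields $\laps{t+\beta} u \in L^p_{\loc}$ together with \eqref{beta-diff}.

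The main technical obstacle is quantitatively balancing the derivative loss $-\eps$ encoded in $|\laps{t-\eps}u|$ against the smoothing gain $+(\sigma-\eps)$ coming from the Riesz potential: the net Sobolev improvement $\sigma$ must be strictly positive and uniform in the data so that the argument can be iterated on successive scales. The parameter $\sigma_0$ furnished by Proposition~\ref{pr:prop11} provides exactly this uniform lower bound on the available gain, while the constraint $p < 1/\theta$ ensures $\sigma p < n$, keeping the target Sobolev exponent $np/(n-\sigma p)$ strictly larger than $p$, which is what produces the quantitative improvement $\bar\eps$ depending only on $\alpha$ and $\theta$.
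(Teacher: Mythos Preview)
Your overall strategy for the first claim (the integrability gain) is the same as the paper's: use the decomposition \eqref{2nd-div} with $s_1=t$, $s_2=2s-t$, estimate the commutators $\mathcal{D}_1^s$ and $\mathcal{D}_2^{t,2s-t}$ via Propositions~\ref{pr:prop11} and~\ref{pr:prop12}, absorb them into the data, and invoke Theorem~\ref{pr:ourtheorem1.7}. One imprecision: you write ``$|\Phi(z)|\aleq \lapms{\sigma-\eps}|\laps{t-\eps}u|(z)$ pointwise'', but Propositions~\ref{pr:prop11}--\ref{pr:prop12} only give bounds on the bilinear forms, not a pointwise identity. The paper handles this via a duality representation: from $|\mathcal{D}_i(u,\varphi)|\aleq \int G\,|\laps{2s-t}\varphi|$ one obtains (by \cite[Proposition~2.2]{MSY21}) some $g$ with $\mathcal{D}_i(u,\varphi)=\int\langle g,\laps{2s-t}\varphi\rangle$ and $\|g\|_{L^r}\aleq\|G\|_{L^r}$. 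This is what you need, and it yields the same $L^r$ control.

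For the differentiability gain \eqref{beta-diff} your route differs from the paper's, and as written it has a circularity. You propose to redo the decomposition with $s_1=t+\beta$, $s_2=2s-t-\beta$ and then apply Theorem~\ref{pr:ourtheorem1.7} with these shifted exponents. But the hypothesis of Theorem~\ref{pr:ourtheorem1.7} (and of Proposition~\ref{intermediate-regular-Klaps} underneath it) is that $\laps{t+\beta}u\in L^q(\R^n)$ for \emph{some} $q$, which is precisely what you are trying to establish; the assumptions $u\in H^{s,2}\cap H^{t,p}\cap H^{t,2}$ do not supply this since $t+\beta>t\geq s$. The paper avoids this by keeping the decomposition at $s_1=t$: it uses the first inequality of Proposition~\ref{pr:prop11} (Riesz potential on the $\varphi$ side) together with Sobolev to represent the commutators as $\int\langle g_\beta,\laps{2s-t-\beta}\varphi\rangle$ with $g_\beta\in L^{np/(n-(\sigma-\beta)p)}$, and then invokes the \emph{second} part of Theorem~\ref{pr:ourtheorem1.7}. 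That second part uses the $\alpha$-H\"older continuity of $A_D(z)=A(z,z)$ (inherited from $A\in\mathcal{A}(\alpha,\lambda,\Lambda)$) to pass from control of $\laps{t}u$ to control of $\laps{t+\beta}u$ directly, without any a~priori assumption on $\laps{t+\beta}u$. If you want to salvage your shifted-exponent approach you would need either an approximation/a~priori argument or to reproduce the commutator estimate for $[\laps{\beta},A_D]$ that is hidden inside the second part of Theorem~\ref{pr:ourtheorem1.7}; the paper's organization is cleaner here.
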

\begin{proof}
We proceed very similar to \cite{MSY21}, by reformulating the system of equations to the weighted fractional Lam\'e system studied in \Cref{intermediate-regular-Klaps} -- up to the commutators introduced in \Cref{s:commies}. Set
\[
F[\varphi] := \int_{\R^{n}} \langle f_1\,, \laps{2s-t} \varphi\,\rangle dz 
 + \int_{\R^{n}} \langle f_2\,, \varphi\,r\rangle dz.
\]
The for $t\in [s, 2s)$, if $u$ solves \eqref{eq:slight:1}, then recalling  the decomposition \eqref{2nd-div}, we have  up to a constant multiple 
\begin{equation}\label{weighted-lame-red}
\langle\bar{\mathfrak{L}}_{A_D}^{t,2s-t} u, \varphi\rangle =F[\varphi] -\mathcal{D}_{1}^{s}(u,\varphi) - \mathcal{D}^{t,2s-t}_{2}(u,\varphi), \quad \text{for all   $\varphi\in C_{c}(\R^{n}; \R^{n})$. } 
\end{equation}
where  the linear operator $\bar{\mathfrak{L}}_{A_D}^{t,2s -t}$ is   the weighted fractional Lam\'e operator introduced in  \eqref{fractional-Lame} with $A_{D}(z) = A(z, z)$ is bounded from below and above by positive constants. The  functionals $\mathcal{D}_{1}^{s}$ and $\mathcal{D}_{2}^{s_1, s_2}$ are as defined in \Cref{s:commies}. We now define the two operators
\[
T_{1}[\varphi] = \mathcal{D}_{1}^{s}(u,\varphi),\quad \text{and} \quad T_2[\varphi] = \mathcal{D}^{s_1,s_2}_{2}(u,\varphi)
\]
which are linear in  $\varphi\in C_{c}^{\infty}(\mathbb{R}^{n})$.  
Given $\theta$ as in the theorem, we can choose $\epsilon$ sufficiently small so that if we take $\sigma =8\epsilon$ we have 
\[
{np'\over n+ \sigma p'} \in (1, \infty) \quad\text{for all $p\in (2, {1\over \theta})$}
\]
and that \Cref{pr:prop11} and \Cref{pr:prop12} hold. Applying \Cref{pr:prop11} with this $\sigma$ and Sobolev inequalities Lemma \ref{S-I} we see that for any $\beta\in [0, \epsilon]$
\[
\begin{split}
|T_1[\varphi]| &\aleq \int_{\mathbb{R}^n} |\laps{t}u|(x) \lapms{\sigma-\epsilon} |\laps{2s-t-\epsilon}\varphi|(x)\,dx\\
&\aleq\|\laps{t}u\|_{L^{p}}\|\laps{2s-t-\epsilon}\varphi\|_{L^{{np'\over n + (\sigma-\epsilon)p'}}(\R^{n})}\\
&\aleq\|\laps{t}u\|_{L^{p}}\|\laps{2s-t-\epsilon}\varphi\|_{L^{{np'\over n + (\sigma-\beta)p'}}(\R^{n})}.
\end{split}
\]As a consequence, we have that for any $\beta\in [0, \epsilon]$
\[
T \in \left( \dot{H}^{2s-t-\beta, {np' \over n + (\sigma -\beta)p'}} (\R^n)\right)^\ast.
\]
By representation of the dual elements \cite[Proposition 2.2]{MSY21}, there exists $g^1_\beta \in L^{{np \over n-(\sigma -\beta)p} }(\R^n, \R^n)$ such that 
\[
T_1[\varphi] = \int_{\R^n} \langle g^1_{\beta}(x), \laps{2s-t-\beta}\varphi(x)\rangle \,dx.  
\]
Similarly, applying \Cref{pr:prop12} and repeating the above calculation for $T_2$, for any $\beta\in [0, \epsilon]$, and $\epsilon < \alpha$, we can get from the representation of dual elements that a vector field $g^2_\beta \in L^{{np \over n-(\sigma -\beta)p} }(\R^n, \R^n)$ such that
\[
T_2[\varphi] = \int_{\R^n} \langle g^2_{\beta}(x), \laps{2s-t-\beta}\varphi(x)\rangle \,dx. 
\]
After denoting $g_{\beta} := g^1_{\beta} + g^2_{\beta}$, we can now rewrite \eqref{weighted-lame-red} as
\[
\langle\bar{\mathfrak{L}}_{A_D}^{t,2s-t} u, \varphi\rangle = \int_{\R^n}\langle \laps{\beta} f_1 + g_{\beta}, \laps{2s-t-\beta} \varphi\rangle \,dx + \int_{\R^{n}} \langle f_2,\varphi \rangle  
\]
for all $\beta\in[0, \epsilon]$ and $\varphi\in C_c(\mathbb{R}^{n}, \R^{n})$. 

Now if $\beta=0$, then we may apply \Cref{pr:ourtheorem1.7} to conclude that for any $\Omega\subsubset \R^n$ and $r\in[p, {np \over n-\sigma p}]$ we have 
\[
\|\laps{t} u\|_{L^{r}(\Omega)}\aleq \sum_{i=1}^{2}\|f_i\|_{L^{r}(\R^n)} + \|f_i\|_{L^{p}(\R^n)}+\|\laps{t} u\|_{L^{p}(\R^n)}.
\]
We notice that there exists an $\bar{\epsilon}>0$  such that ${np\over n-\sigma p } \geq p + \bar{\epsilon}$ for all $p\in [0, {2\over \theta}]$. 

Also, if $\beta\in (0, \epsilon)$, since $A_D$ is $\alpha$-H\"older ocntinous uniformly, we may apply the second part of \Cref{pr:ourtheorem1.7} to obtain \eqref{beta-diff}. 
\end{proof}

\section*{Acknowledgement}
The authors acknowledge funding as follows
\begin{itemize}
\item TaMen: National Science Foundation (NSF), grant no DMS-2206252.
\item AdSee: Thailand Science Research and Innovation Fundamental Fund fiscal year 2024
\item ArSch: National Science Foundation (NSF), NSF Career DMS-2044898
\item SaYee: Thammasat Postdoctoral Fellowship
\end{itemize}
The research that lead to this work was partially carried out while ArSch was visiting Chulalongkorn University. ArSch is a Humboldt fellow.

\bibliographystyle{abbrv}
\bibliography{bib}

\begin{thebibliography}{10}

\bibitem{BiccariWarmaZuazua+2017+387+409}
U.~Biccari, M.~Warma, and E.~Zuazua.
\newblock {Local Elliptic Regularity for the Dirichlet Fractional Laplacian}.
\newblock {\em Advanced Nonlinear Studies}, 17(2):387--409, 2017.

\bibitem{byun2023regularity}
S.-S. Byun, K.~Kim, and D.~Kumar.
\newblock {Regularity results for a class of nonlocal double phase equations
  with VMO coefficients}, 2023.

\bibitem{Matteo2017}
M.~Cozzi.
\newblock Interior regularity of solutions of non-local equations in sobolev
  and nikol'skii spaces.
\newblock {\em Annali di Matematica Pura ed Applicata (1923 -)},
  196(2):555--578, 2017.

\bibitem{DNPV12}
E.~Di~Nezza, G.~Palatucci, and E.~Valdinoci.
\newblock Hitchhiker's guide to the fractional {S}obolev spaces.
\newblock {\em Bull. Sci. Math.}, 136(5):521--573, 2012.

\bibitem{DONG20121166}
H.~Dong and D.~Kim.
\newblock {On Lp-estimates for a class of non-local elliptic equations}.
\newblock {\em Journal of Functional Analysis}, 262(3):1166 -- 1199, 2012.

\bibitem{Du-Gun-Leh-Zhou}
Q.~Du, M.~Gunzburger, R.~B. Lehoucq, and K.~Zhou.
\newblock Analysis of the volume-constrained peridynamic navier equation of
  linear elasticity.
\newblock {\em Journal of Elasticity}, 113(2):193--217, 2013.

\bibitem{Fall1}
M.~M. Fall.
\newblock Regularity results for nonlocal equations and applications.
\newblock {\em Calc. Var. Partial Differential Equations}, 59(5):Paper No. 181,
  53, 2020.

\bibitem{Fall2}
M.~M. Fall, T.~Mengesha, A.~Schikorra, and S.~Yeepo.
\newblock Calder\'{o}n-{Z}ygmund theory for non-convolution type nonlocal
  equations with continuous coefficient.
\newblock {\em Partial Differ. Equ. Appl.}, 3(2):Paper No. 24, 27, 2022.

\bibitem{G19}
N.~Garofalo.
\newblock Fractional thoughts.
\newblock In {\em New developments in the analysis of nonlocal operators},
  volume 723 of {\em Contemp. Math.}, pages 1--135. Amer. Math. Soc.,
  Providence, RI, 2019.

\bibitem{10.1007/978-3-030-00874-1_2}
G.~Grubb.
\newblock {Fractional-Order Operators: Boundary Problems, Heat Equations}.
\newblock In L.~G. Rodino and J.~Toft, editors, {\em Mathematical Analysis and
  Applications---Plenary Lectures}, pages 51--81, Cham, 2018. Springer
  International Publishing.

\bibitem{GRUBB20182634}
G.~Grubb.
\newblock {Regularity in Lp Sobolev spaces of solutions to fractional heat
  equations}.
\newblock {\em Journal of Functional Analysis}, 274(9):2634--2660, 2018.

\bibitem{Ingmanns20}
J.~{Ingmanns}.
\newblock {Estimates for commutators of fractional differential operators via
  harmonic extension}.
\newblock {\em Master Thesis, arXiv e-prints}, page arXiv:2012.12072, Dec.
  2020.

\bibitem{KMS15}
T.~Kuusi, G.~Mingione, and Y.~Sire.
\newblock Nonlocal self-improving properties.
\newblock {\em Anal. PDE}, 8(1):57--114, 2015.

\bibitem{LS18}
E.~Lenzmann and A.~Schikorra.
\newblock Sharp commutator estimates via harmonic extensions.
\newblock {\em Nonlinear Anal.}, 193:111375, 37, 2020.

\bibitem{Mengesha-Du-2014}
T.~Mengesha and Q.~Du.
\newblock {Nonlocal constrained value problems for a linear peridynamic navier
  equation}.
\newblock {\em Journal of Elasticity}, 116(1):27–51, 2014.

\bibitem{MSY21}
T.~Mengesha, A.~Schikorra, and S.~Yeepo.
\newblock Calderon-{Z}ygmund type estimates for nonlocal {PDE} with
  {H}\"{o}lder continuous kernel.
\newblock {\em Adv. Math.}, 383:Paper No. 107692, 64, 2021.

\bibitem{Mengesh-Scott2019}
T.~Mengesha and J.~M. Scott.
\newblock The solvability of a strongly-coupled nonlocal system of equations.
\newblock {\em Journal of Mathematical Analysis and Applications}, 486:123919,
  2020.

\bibitem{NOWAK2020111730}
S.~Nowak.
\newblock Hs,p regularity theory for a class of nonlocal elliptic equations.
\newblock {\em Nonlinear Analysis}, 195:111730, 2020.

\bibitem{Nowak2023}
S.~Nowak.
\newblock Regularity theory for nonlocal equations with vmo coefficients.
\newblock {\em Ann. Inst. H. Poincar\'e Anal. Non Lin\'eaire}, 40(1):61–132,
  2023.

\bibitem{RS96}
T.~Runst and W.~Sickel.
\newblock {\em Sobolev spaces of fractional order, {N}emytskij operators, and
  nonlinear partial differential equations}, volume~3 of {\em De Gruyter Series
  in Nonlinear Analysis and Applications}.
\newblock Walter de Gruyter \& Co., Berlin, 1996.

\bibitem{S02}
S.~G. Samko.
\newblock {\em Hypersingular integrals and their applications}, volume~5 of
  {\em Analytical Methods and Special Functions}.
\newblock Taylor \& Francis, Ltd., London, 2002.

\bibitem{ArminPhD}
A.~{Schikorra}.
\newblock {Regularity of n/2-harmonic maps into spheres}.
\newblock {\em PhD-thesis, arXiv:1003.0646}, Mar 2010.

\bibitem{S16}
A.~Schikorra.
\newblock Nonlinear commutators for the fractional {$p$}-{L}aplacian and
  applications.
\newblock {\em Math. Ann.}, 366(1-2):695--720, 2016.

\bibitem{SM2019-Korn}
J.~Scott and T.~Mengesha.
\newblock A fractional korn-type inequality.
\newblock {\em Discrete and Continuous Dynamical Systems}, 39(6):3315--3343,
  2019.

\bibitem{Scott-Mengesha-2022}
J.~Scott and T.~Mengesha.
\newblock Self-improving inequalities for bounded weak solutions to nonlocal
  double phase equations.
\newblock {\em Communications on Pure and Applied Analysis}, 21(1):183--212,
  2022.

\bibitem{Scott22}
J.~M. {Scott}.
\newblock {The Fractional Lam{\'e}-Navier Operator: Appearances, Properties and
  Applications}.
\newblock {\em arXiv e-prints}, page arXiv:2204.12029, Apr. 2022.

\bibitem{Silling2001}
S.~A. Silling.
\newblock Reformulation of elasticity theory for discontinuities and long-range
  forces.
\newblock {\em Journal of the Mechanics and Physics of Solids},
  48(1):175–209, 2000.

\bibitem{silling2010linearized}
S.~A. Silling.
\newblock Linearized theory of peridynamic states.
\newblock {\em Journal of Elasticity}, 99(1):85–111, 2010.

\bibitem{alma996965690102311}
E.~M. Stein.
\newblock Singular integrals and differentiability properties of functions,
  1970.

\end{thebibliography}

\end{document}